\tikzstyle arrowstyle=[scale=1]
\tikzstyle directed=[postaction={decorate,decoration={markings,
    mark=at position .65 with {\arrow[arrowstyle]{stealth}}}}]
\newcommand{\intR}{\int\limits_{\mathbb{R}} }
\newcommand{\intRR}{\int\limits_{\mathbb{R}^2} }
\newcommand{\intL}{\int\limits }
\newcommand{\half}{^\infty_0 }
\newcommand{\RRn}{\RR^{n-1}\times\RR^{n-1}}
\newcommand{\RR}{\mathbb{R}} 
\newcommand{\xx}{\mathbf{x}} 
\newcommand{\yy}{\mathbf{y}} 
\newcommand{\uu}{\mathbf{u}} 
\newcommand{\vv}{\mathbf{v}} 
\newcommand{\xxi}{\boldsymbol{\xi}} 
\newcommand{\eeta}{\boldsymbol{\eta}} 
\newcommand{\aaa}{\mathbf{a}} 
\newtheorem{thm}{Theorem}
\newtheorem{cor}{Corollary}
\newtheorem{prop}{Proposition}
\theoremstyle{definition}
\newtheorem{rmk}{Remark}
\newcommand{\eor}{\hfill {\scriptsize\OrnamentDiamondSolid}}
\title{On the determination of a function from its cone transform with fixed central axis}
\author{Sunghwan Moon\footnote{shmoon@unist.ac.kr}}
\date{{\em{Department of Mathematical Sciences,}}\\
{\em{Ulsan National Institute of Science and Technology,}}\\
{\em{Ulsan 689-798, Republic of Korea}}}
\begin{document}\maketitle
\begin{abstract}
A Radon-type transform called a \textbf{cone transform} that assigns to a given function its integral over various sets of the cones has arisen in the last decade in the context of the study of Compton cameras used in Single Photon Emission Computed Tomography.
Here, we study the cone transform for which the central axis of the cones of integration is fixed.
We present many of its properties, such as two inversion formulas, a stability estimate, and uniqueness and reconstruction for a local data problem.
\end{abstract}

\section{Introduction}
\textbf{Single Photon Emission Computed Tomography (SPECT)}, a useful medical diagnostic tool, inspects internal organs and produces pictures of their internal processing using the distribution of an isotope.
SPECT typically provides the information as cross-sectional slice but it is easy to reformat or manipulate into other types of images.
To obtain the image in SPECT, a gamma-emitting radioisotope is injected into the patient, usually, via the bloodstream.
This radioisotope passes through the body and is detected by the scan.
%It is important to know the distribution of this isotope.

When the conventional gamma camera is used in SPECT, it is common to eliminate large parts of the signal since it is mechanically collimated, which means counts only the particles coming from a narrow cone of directions and discards the rest. 
%Thus it is highly possible to eliminate useful signal. 
Because of such low efficiency, a Compton Camera was introduced for use in SPECT.
A Compton camera has very high sensitivity and flexibility of geometrical design, so it has attracted a lot of interest in many areas, including monitoring nuclear power plants and in astronomy.
%A Radon-type transform that assigns to a given function its integral over various sets of cones has been arising in the recent decade, due to study in \textbf{Compton cameras}.
%A Compton camera, also called \textbf{electronically collimated $\gamma$-camera}, was introduced for use in \textbf{Single Photon Emission Computed Tomography (SPECT)}, a very useful medical diagnostics tool \cite{toddne74}.
%

A typical Compton camera consists of two planar detectors: a scatter detector and an absorption detector, positioned one behind the other. 
A photon emitted in the direction of the camera undergoes Compton scattering in the scatter detector positioned ahead, and is absorbed in the absorption detector (see Figure~\ref{fig:compton}). 
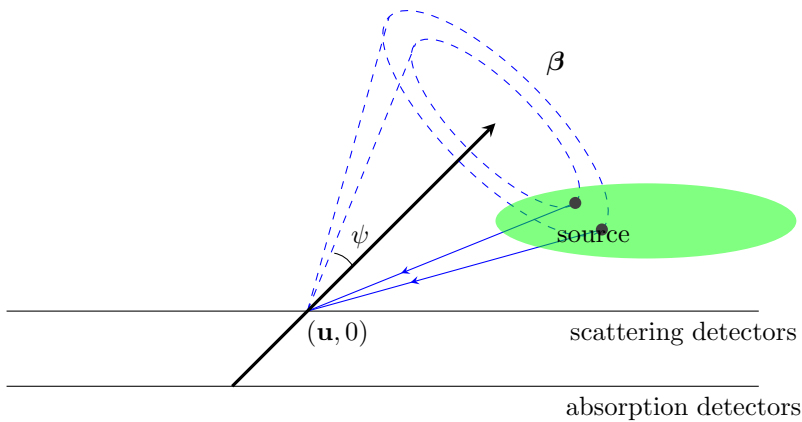
\begin{figure}
\begin{center}
  \begin{tikzpicture}[>=stealth]
   \draw (-3,0) -- (7,0) ;
      \draw (-3,1) -- (7,1) ;
       \draw[blue,directed] (3.5+2/1.4141,3.5-2/1.4141) -- (1,1) ;
   \draw[blue,directed] (3.5+1.5/1.4141,3.5-1.5/1.4141) -- (1,1) ;
   \draw[blue,dashed] (1,1) -- (3.5-1.5/1.4141,3.5+1.5/1.4141) ;
   \draw[blue,dashed] (1,1) -- (3.5-2/1.4141,3.5+2/1.4141) ;
   \draw[very thick,->] (0,0) -- (3.5,3.5) ;
   \draw[blue,rotate around={-45:(3.5,3.5)},dashed] (5.5,3.5) arc (0:360:2cm and 0.7cm);
   \draw[blue,rotate around={-45:(3.5,3.5)},dashed] (5,3.5) arc (0:360:1.5cm and 0.5cm);
  \fill[semitransparent,green] (5.5,2.2) ellipse (2 and 0.5) ;5
   \fill[gray!50!black]  (3.5+1.5/1.4141,3.5-1.5/1.4141) circle (0.08);
      \fill[gray!50!black]  (3.5+2/1.4141,3.5-2/1.41411) circle (0.08);
   \draw (1.6,1.6) arc (30:80:10pt);

    \node at (6,0.7) {scattering detectors};
    \node at (6,-0.3) {absorption detectors};
   \node at (1.4,0.7) {$(\uu,0)$};
   \node at (4.3,4.3) {$\boldsymbol\beta$};
   \node at (1.7,2) {$\psi$};
   \node at (4.8,2) {source};
  \end{tikzpicture}
  \caption{Schematic representation of a Compton camera}
\end{center}
\label{fig:compton}
\end{figure}
In each detector, the position of the hit and energy of the photon are measured.
A difference vector between two device positions determines the central axis of a cone.
The scattering angle $\psi$ from the central axis can be computed from the measured energies and electron mass as follows:
$$
\cos\psi=1-\frac{m c^2 \Delta E}{(E-\Delta E)E},
$$
where $m$ is the mass of the electron, $c$ is the speed of light, $E$ is the initial gamma ray energy, and $\Delta E$ is the energy transferred to the electron in the scattering process~\cite{allmarasdhkk12,maximfp09}. 
Therefore, we get the integral of the distribution of the radiation source over cones with central axis ${\beta}$, vertex $u$ at the position of the scatter detector and scattering angle $\psi$. 
We reserve the name \textbf{cone transform} for the surface integrals of a source distribution over a family of cones.

Many inversion formulas for various types of cone transforms were derived~\cite{baskozg98,gouiaa14,gouia14,smith05,maximfp09}. 
In particular, the cone transform with a fixed central axis was studied in~\cite{creeb94,haltmeier14,moonct14,nguyentg05,truongnz07}. 
Cree and Bones derived the inversion formula for the cone transform in  \cite{creeb94} and Nguyen, Truong, and Grangeat obtained another inversion formula in \cite{nguyentg05}.
Haltmeier first defined an $n$-dimensional cone transform and found the inversion formula in \cite{haltmeier14}.
Jung and Moon discussed a relation between two existing formulas: one derived from Cree and Bones and the other from Nguyen, Truong, and Grangeat. 
In the same paper, they also obtained stability estimates for a more general form of the 3-dimensional cone transform in \cite{moonct14}.
In this article, we also deal this $n$-dimensional cone transform. 
%We present some elementary properties and more general inversion formulas for the cone transform.
%Also, various other properties including the stability estimates are studied.

A 2-dimensional cone transform becomes a $V$-line Radon transform, which integrates a function along coupled rays with a common vertex. 
This $V$-line Radon transform has been studied in the context of Single Scattering Optical Tomography~\cite{florescums09,florescums10,florescums11}.
Many works~\cite{allmarasdhkk12,ambartsoumian12,moonvrt13,morvidonentz10,truongn11} derived inversion formulas for various versions of the $V$-line Radon transform.

The definition of the cone transform is formulated precisely in section~\ref{sec:formulation}. 
Section~\ref{sec:properties} is devoted to elementary properties of the cone transform including an analog of the Fourier slice theorem. 
Two inversion formulas are presented in section~\ref{sec:inversion}. 
%In fact, our inversion formulas are an extension version of existed formulas.
We describe the range of the cone transform in odd dimensions in section~\ref{sec:range}.
In section~\ref{sec:isometry}, we show that taking a linear operator on the cone transform is an isometry and discuss a stability estimate.
In section~\ref{sec:uniqueness}, uniqueness and reconstruction for a partial data problem are studied.
%%%%%%%%%%%%%%%%%%%%%%%%%%%%%%%%%%%%%%%%%%%%%%%%%%%%%%%%%%%
\section{Definition}\label{sec:formulation}
%%%%%%%%%%%%%%%%%%%%%%%%%%%%%%%%%%%%%%%%%%%%%%%%%%%%%%%%%%%
Let $\mathbf f$ be a function on $\RR^3$ with compact support in the upper half space $\RR^2\times[0,\infty)$.
We define the cone transform by
$$
\mathcal C\mathbf f(\uu,s):=\displaystyle\intL^{2\pi}_0\intL\half\mathbf f(\uu+zs\boldsymbol\theta,z) z  d\theta dz,
$$ 
for $(\uu,s)=(u_1,u_2,s)\in\RR^{2}\times[0,\infty)$ (see Figure~\ref{fig:integrationdomain1} (a)).
Here $\boldsymbol\theta=(\cos\theta,\sin\theta)\in S^1$.
Let the function $f$ on $\RR^4$ satisfy $f(\xx,\yy)=\mathbf f(\xx,|\yy|)$ for $(\xx,\yy)=(x_1,x_2,y_1,y_2)\in\RR^2\times\RR^2$ and the cone transform of $f$ be defined by
$$
Cf(\uu,\vv)=\displaystyle \intRR f(\uu+|\vv|\yy,\yy) d\yy.
$$
Then we have $Cf(\uu,\vv)=\mathcal C\mathbf f(\uu,|\vv|)$.
In fact, making a change of the variables gives
\begin{equation*}
\begin{split}
Cf(\uu,\vv)&=\displaystyle \intRR f(\uu+|\vv|\yy,y_1,y_2) d\yy=\displaystyle \intL^{2\pi}_0 \intL\half \mathbf f(\uu+|\vv|r\boldsymbol\theta,r)r drd\theta\\
&=\mathcal C\mathbf f(\uu,|\vv|).
\end{split}
\end{equation*}

Let us consider a natural $n$-dimensional analog of the cone transform for the function $\mathbf f$ on $\RR^n$ with compact support in the upper half space $\RR^{n-1}\times[0,\infty)$.
As in the 3-dimensional case, the cone transform is defined by
\begin{equation*}
\mathcal C\mathbf f(\uu,s):=\left\{\begin{array}{ll}\displaystyle\intL_{S^{n-2}}\intL\half\mathbf f(\uu+zs\boldsymbol\theta,z) z^{n-2}  dz dS(\boldsymbol\theta),&\mbox{ if }n\geq3,\\
\displaystyle\intL\half\mathbf f(\uu+zs,z)+\mathbf f(\uu-zs,z)  dz ,&\mbox{ if }n=2,\end{array}\right.
\end{equation*}
for $(\uu,s)=(u_1,u_2,\cdots,u_{n-1},s)\in\RR^{n-1}\times[0,\infty)$.
Here $dS(\boldsymbol\theta)$ is the standard measure on the unit sphere $S^{n-2}$.
When $n=2$, $\mathcal C\mathbf f$ is the $V$-line Radon transform whose integral domain is the set of $V$-shape lines (see Figure \ref{fig:integrationdomain1} (b)).
Similar to the definition of $Cf$ for the 3-dimensional case, we define $Cf$ for a function $f$ on $\RR^{2(n-1)}$ with $f(\xx,\yy)=\mathbf f(\xx,|\yy|)$ for $(\xx,\yy)\in\RR^{n-1}\times\RR^{n-1}$ by
\begin{equation}\label{eq:defiofcf}
Cf(\uu,\vv)=\displaystyle \intL_{\RR^{n-1}} f(\uu+|\vv|\yy,\yy) d\yy,\qquad\mbox{for } (\uu,\vv)\in\RR^{n-1}\times\RR^{n-1}.
\end{equation}
%This definition~\eqref{eq:defiofcf} includes $n=2$ case. When $n=2$, $Cf$ becomes integrals of $f$ over lines (see figure\ref{fig:integrationdomain1} (b)).
Again, we have $Cf(\uu,\vv)=\mathcal C\mathbf f(\uu,|\vv|)$.
Our goals are to reconstruct $f$ (or $\mathbf f$) from $Cf$ (or $\mathcal C\mathbf f$) and to study properties of this cone transform.
\begin{rmk}\label{rmk:2drelation}
The above definition~\eqref{eq:defiofcf} includes the $n=2$ case. %When $n=2$, the $Cf$ becomes integrals of $f$ over lines .
When $n=2$, $Cf$ becomes an integral of $f$ along the line perpendicular to 
$$
(1,-v)/\sqrt{1+|v|^2}
$$
with signed distance
$$
u/\sqrt{1+|v|^2}\quad\mbox{(see Figure \ref{fig:integrationdomain1} (b))}.
$$
In this case the measure for the line becomes
$$
\sqrt{1+|v|^2}dy.
$$
Hence we have a relation between $Cf$ and $Rf$:
\begin{equation}\label{eq:relationbetweenrfandcf}
Cf(u,v)=\sqrt{1+|v|^2}Rf\left(\frac{(1,-v)}{\sqrt{1+|v|^2}},\frac u{\sqrt{1+|v|^2}}\right),
\end{equation}
where $Rf$ is the regular Radon transform defined by 
$$
Rf(\boldsymbol\omega,t)=\intR f(t\boldsymbol\omega+s\boldsymbol\omega^\perp)ds,\qquad\mbox{for } (\boldsymbol\omega,t)\in S^1\times\RR.
$$
Notice that 
$$
Rf\left(\frac{(1,-v)}{\sqrt{1+|v|^2}},\frac u{\sqrt{1+|v|^2}}\right)=Rf\left(\frac{(1,v)}{\sqrt{1+|v|^2}},\frac u{\sqrt{1+|v|^2}}\right).
$$
%Hence, we have
%$$
%Rf(\boldsymbol\theta,t)=\cos\theta Cf(t\sec\theta,\tan\theta).
%$$
\eor\end{rmk}

%\begin{figure}[here]
%\begin{center}
%      \subfigure[]{
%            \includegraphics[width=0.45\textwidth]{cone.eps}        }
%          \subfigure[]{
%     \includegraphics[width=0.35\textwidth]{vline.eps}}
%\end{center}
%\caption{Integration domain (a) a cone (b) a $V$-line and a line }
%\label{fig:integrationdomain1}
%\end{figure} 
\begin{figure}
\begin{center}
  \begin{tikzpicture}[>=stealth]
   \draw (-2,0) -- (4,0) ;
   \draw (4,0) -- (6,4) ;
   \draw (0,4) -- (6,4) ;
  \draw (-2,0) -- (0,4) ;
  \draw[thick] (2,2) -- (5,5) ;
   \draw[thick] (2,2) -- (-1,5) ;
    \draw[<->,loosely dashed] (3,3) -- (2,3);
      \draw[->,dashed] (2,2) -- (2,6) ;
   \draw[blue,dashed] (4,4) arc (0:180:2cm and 0.35cm);
      \draw[blue] (0,4) arc (180:360:2cm and 0.35cm);

  \node at (2,1.5) {$(\mathbf u,0)$};
%   \node at (2.2,6.2) {$\boldsymbol\beta$};
    \coordinate (O1) at (6,0) ;
    	
    \draw[->] (8,-1.1) -- (8,6);
     \draw[->] (7.5,1) -- (12,1);
    \draw[thick] (9,1) -- (7,5);
    \draw[thick] (9,1) -- (11,5);
    \draw[->,loosely dashed] (9,1) -- (9,5.5);
    \draw[thick,densely dashed] (9,1) -- (8,-1);
    \draw[<->,loosely dashed] (10,3) -- (9,3);
      \node at (2.5,3.5) {$s$};
        \node at (9.5,3.5) {$s$};
  \node at (9,0.5) {$(u,0)$};
    \node at (12,0.3) {$x$};
        \node at (8.3,6.2) {$y$};
         \node at (2,-1) {(a)};
        \node at (9,-1) {(b)};
  \end{tikzpicture}
  \caption{Integration domain (a) a cone (b) a $V$-line and a line }
\end{center}
\label{fig:integrationdomain1}
\end{figure}
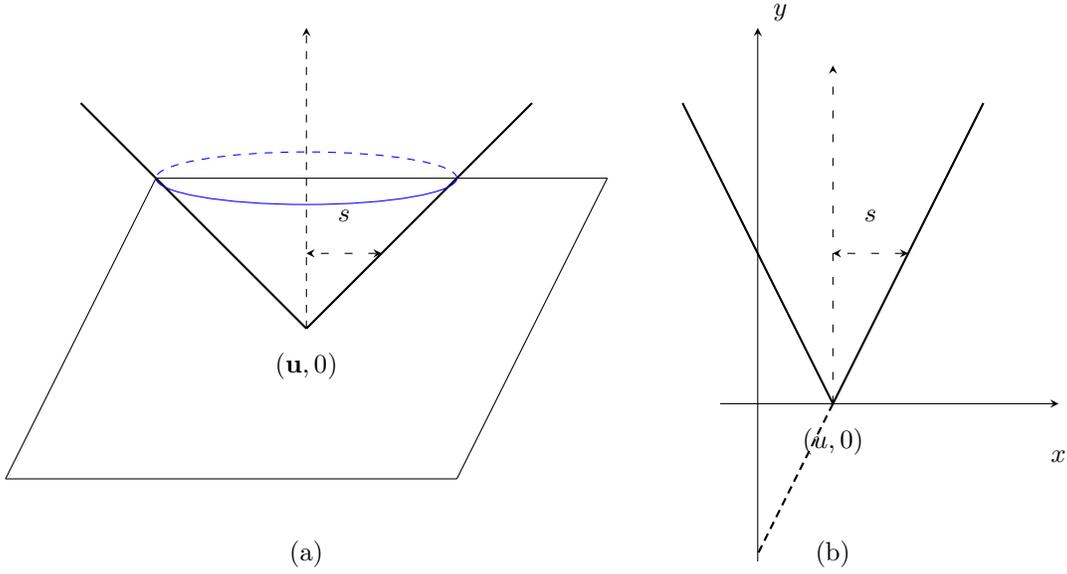

%%%%%%%%%%%%%%%%%%%%%%%%%%%%%%%%%%%%
\section{Elementary properties}\label{sec:properties}
%%%%%%%%%%%%%%%%%%%%%%%%%%%%%%%%%%%
Let $\mathcal S(\RR^n)$ be the Schwartz class of infinitely differentiable functions $\mathbf f$ with $\sup_{\xx\in\RR^n}|\xx^\alpha\partial^\beta \mathbf f(\xx)|<\infty$ for any multiindices $\alpha$ and $\beta$.
We introduce 
\begin{equation*}
\begin{split}
\mathcal S_r(\RR^{n-1}\times\RR^{n-1})=\{f\in\mathcal S(\RR^{2(n-1)}):&f(\xx,\yy)=f(\xx,U\yy)\quad\mbox{ for any }(\xx,\yy)\in\RR^{n-1}\times\RR^{n-1}\\
&\mbox{and for all orthonormal transformations }U\}.
\end{split}
\end{equation*}

\begin{thm}\label{lem:fourierslicetheorem}
For $f\in \mathcal S_r(\RRn)$, we have
$$
\mathcal F_{1}(Cf)(\boldsymbol\xi,\vv)=\mathcal F_{} f(\boldsymbol\xi,|\vv|\boldsymbol\xi)=\mathcal F_{}f(\boldsymbol\xi,\vv|\boldsymbol\xi|),
$$
where $\mathcal F_{1}(Cf)$ and $\mathcal F_{}f$ are the $n-1$-dimensional and $2(n-1)$-dimensional Fourier transforms of $Cf$ and $f$ with respect to $\uu$ and $(\xx,\yy)$, respectively.
\end{thm}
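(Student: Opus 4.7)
The plan is a direct computation from the definitions, supplemented at the end by the radial symmetry of $f$ in its second variable. I would unwind the Fourier transform in $\uu$, swap the order of integration (valid since $f\in\mathcal S_r(\RRn)$ is Schwartz, so Fubini applies), and then perform a translation in the $\uu$ variable.

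First I would write, treating $\vv$ and $\xxi$ as fixed,
$$
\mathcal F_{1}(Cf)(\xxi,\vv) = \intL_{\RR^{n-1}} \intL_{\RR^{n-1}} f(\uu+|\vv|\yy,\yy)\,d\yy\, e^{-i\uu\cdot\xxi}\, d\uu.
$$
After swapping the two integrations, I would substitute $\xx=\uu+|\vv|\yy$ for fixed $\yy$ in the integration against $\uu$. The Jacobian is one, so $d\uu=d\xx$, and the inner exponential becomes $e^{-i(\xx-|\vv|\yy)\cdot\xxi}$. This yields
$$
\intL_{\RR^{n-1}} \intL_{\RR^{n-1}} f(\xx,\yy)\, e^{-i\xx\cdot\xxi}\, e^{-i\yy\cdot(-|\vv|\xxi)}\, d\xx\, d\yy = \mathcal F f(\xxi,-|\vv|\xxi),
$$
where I recognize the double integral as the full $2(n-1)$-dimensional Fourier transform of $f$.

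To match the two forms appearing in the statement I invoke the radial symmetry in $\yy$. Because $f(\xx,\yy)=\mathbf f(\xx,|\yy|)$ is invariant under orthogonal transformations acting on $\yy$, its Fourier transform $\mathcal F f(\xxi,\eeta)$ is likewise invariant under orthogonal transformations acting on $\eeta$; hence $\mathcal F f(\xxi,\eeta)$ depends on $\eeta$ only through $|\eeta|$. The three vectors $-|\vv|\xxi$, $|\vv|\xxi$, and $\vv|\xxi|$ all have common length $|\vv|\,|\xxi|$, so $\mathcal F f$ takes the same value on each, giving both equalities in the theorem simultaneously.

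There is no real obstacle here: Fubini and the translation substitution are immediate for Schwartz data, and the only mildly substantive point is the final radial-symmetry step, which relies on the standard intertwining of the Fourier transform with rotations applied in the $\yy$-block of variables.
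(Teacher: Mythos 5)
Your proposal is correct and follows essentially the same route as the paper: compute $\mathcal F_1(Cf)$ by a shift of variable in $\uu$ to obtain $\mathcal F f(\boldsymbol\xi,-|\vv|\boldsymbol\xi)$, then use that the Fourier transform of a function radial in $\yy$ is radial in the dual variable to pass to $\mathcal F f(\boldsymbol\xi,|\vv|\boldsymbol\xi)=\mathcal F f(\boldsymbol\xi,\vv|\boldsymbol\xi|)$. The only cosmetic difference is that the paper invokes the translation property of the one-block Fourier transform directly rather than writing out the Fubini step and the substitution $\xx=\uu+|\vv|\yy$ explicitly.
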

\begin{proof}
Taking the $n-1$-dimensional Fourier transform of $Cf(\uu,\vv)$ with respect to $\uu$ yields
$$
\mathcal F_{1}(Cf)(\boldsymbol\xi,\vv)=\intL_{\RR^{n-1}}\mathcal F_{1}f(\boldsymbol\xi,\yy)e^{i|\vv|\yy\cdot\boldsymbol\xi}d\yy=\mathcal F_{} f(\boldsymbol\xi,-|\vv|\boldsymbol\xi),%=\mathcal F_{}f(\xx,\vv|\boldsymbol\xi|).
$$
where $\mathcal F_{1}f$ is the $n-1$-dimensional Fourier transform of $f$ with respect to $\xx$.
Since the Fourier transform of a radial function is also radial, we have the assertion.
%$$
%\mathcal F_{1}(Cf)(\boldsymbol\xi,\vv)=\intL_{\RR^{n-1}}\mathcal F_{1}f(\boldsymbol\xi,\yy)e^{i\vv\cdot\yy}d\yy.%=\mathcal F_{}f(\xx,\vv|\boldsymbol\xi|).
%$$
\end{proof}
Note that if $f$ is a radial function on $\RR^n$ and $f(\xx)=\mathbf f(|\xx|)$ , then 
\begin{equation}\label{eq:fourierofradialfunction}
\mathcal F_{{}}f(\boldsymbol\xi)=(2\pi)^\frac n2|\boldsymbol\xi|^\frac{2-n}2\mathbf H_{\frac{n-2}2}\mathbf f(|\boldsymbol\xi|),
\end{equation}
where $\mathcal F_{{}}f$ is the $n$-dimensional Fourier transform of $f$ and
$$
\mathbf H_{\frac{n-2}2}\mathbf f(\rho)=\intL\half \mathbf f(s)s^\frac n2J_{\frac{n-2}2}(s\rho)ds,
$$
where $J_k$ is the Bessel function of the first kind of order $k$.
\begin{cor}
Let $f(\xx,\yy)\in \mathcal S_r(\RRn)$ and $\mathbf f$ be a function on $\RR^{n-1}\times[0,\infty)$ with $\mathbf f(\xx,|\yy|)=f(\xx,\yy)$.
Then we have
\begin{equation}\label{eq:fourierslicetheorem3}
\mathcal F_{1}(\mathcal C\mathbf f)(\boldsymbol\xi,s)=(2\pi)^{\frac{n-1}2}|\boldsymbol\xi|^\frac{3-n}2s^\frac{3-n}2\mathbf H_{\frac{n-3}2}\mathcal F_{1}\mathbf f(\boldsymbol\xi,s|\boldsymbol\xi|).
\end{equation}
\end{cor}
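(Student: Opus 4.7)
The plan is to deduce this corollary directly from Theorem~\ref{lem:fourierslicetheorem} together with the radial Fourier formula \eqref{eq:fourierofradialfunction}, using the identity $Cf(\uu,\vv)=\mathcal C\mathbf f(\uu,|\vv|)$ to transfer the statement for $Cf$ to one for $\mathcal C\mathbf f$.

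First, since $Cf(\uu,\vv)$ depends on $\vv$ only through $|\vv|$, taking the $(n-1)$-dimensional Fourier transform in $\uu$ preserves this radial dependence, giving $\mathcal F_{1}(Cf)(\boldsymbol\xi,\vv)=\mathcal F_{1}(\mathcal C\mathbf f)(\boldsymbol\xi,|\vv|)$. Writing $s=|\vv|$ and applying Theorem~\ref{lem:fourierslicetheorem} yields
$$
\mathcal F_{1}(\mathcal C\mathbf f)(\boldsymbol\xi,s)=\mathcal F f(\boldsymbol\xi,\vv|\boldsymbol\xi|),
$$
for any $\vv$ with $|\vv|=s$; note that the right-hand side only depends on $|\vv|=s$ because $f\in\mathcal S_r$ makes $\mathcal F f$ radial in its second block of variables.

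Next, I would factor the full Fourier transform as $\mathcal F=\mathcal F_{1}\mathcal F_{2}$, where $\mathcal F_{2}$ is the $(n-1)$-dimensional Fourier transform in $\yy$. For fixed $\xx$, the function $\yy\mapsto f(\xx,\yy)=\mathbf f(\xx,|\yy|)$ is radial in $\RR^{n-1}$, so formula \eqref{eq:fourierofradialfunction} with dimension $n-1$ gives
$$
\mathcal F_{2}f(\xx,\eeta)=(2\pi)^{\frac{n-1}2}|\eeta|^{\frac{3-n}2}\mathbf H_{\frac{n-3}2}\mathbf f(\xx,|\eeta|).
$$
Applying $\mathcal F_{1}$ in $\xx$ commutes with the Hankel operator acting in the radial variable of the other block, yielding
$$
\mathcal F f(\boldsymbol\xi,\eeta)=(2\pi)^{\frac{n-1}2}|\eeta|^{\frac{3-n}2}\mathbf H_{\frac{n-3}2}\mathcal F_{1}\mathbf f(\boldsymbol\xi,|\eeta|).
$$

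Finally, substituting $\eeta=\vv|\boldsymbol\xi|$ with $|\vv|=s$, so that $|\eeta|=s|\boldsymbol\xi|$, and combining with the displayed identity for $\mathcal F_{1}(\mathcal C\mathbf f)$ gives the claimed formula after separating the factors $|\boldsymbol\xi|^{(3-n)/2}$ and $s^{(3-n)/2}$. No genuine obstacle arises; the only care needed is bookkeeping the dimensions correctly in \eqref{eq:fourierofradialfunction} (applied with $n$ replaced by $n-1$) and justifying the interchange of $\mathcal F_{1}$ with the Hankel transform acting on the complementary variable, which is immediate for $f\in\mathcal S_r$.
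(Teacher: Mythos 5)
Your proposal is correct and is essentially the argument the paper intends: the corollary is stated immediately after the radial Fourier formula \eqref{eq:fourierofradialfunction} precisely so that it follows by combining Theorem~\ref{lem:fourierslicetheorem} with that formula applied in dimension $n-1$ to the radial variable $\yy$, exactly as you do. Your dimension bookkeeping (order $\frac{n-3}{2}$, prefactor $(2\pi)^{\frac{n-1}{2}}|\eeta|^{\frac{3-n}{2}}$, substitution $|\eeta|=s|\boldsymbol\xi|$) checks out.
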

%\begin{cor}
%$$
%C(\triangle_\xx f)(\uu,\vv)=\triangle_\uu Cf(\uu,\vv),\quad\mbox{and}\quad C(\triangle_\xx f)(\uu,\vv)=(|\vv|^2+1)\triangle_\uu Cf(\uu,\vv),
%$$
%where $\triangle_\xx$ is the Laplacian on $\xx\in\RR^{n-1}$.
%\end{cor}
%
%\begin{proof}
%By Theorem \ref{lem:fourierslicetheorem}, we have
%\begin{equation*}
%\begin{split}
%\mathcal F_{1}(C\triangle f)(\boldsymbol\xi,\vv)&=\mathcal F_{}(\triangle f)(\boldsymbol\xi,\vv|\boldsymbol\xi|)=(|\vv|^2+1)|\boldsymbol\xi|^2 \mathcal F_{}f(\boldsymbol\xi,\vv|\boldsymbol\xi|)\\
%&=(|\vv|^2+1)|\boldsymbol\xi|^2 \mathcal F_{1}(Cf)(\boldsymbol\xi,\vv).
%\end{split}
%\end{equation*}
%\end{proof}
\begin{rmk}
When $n=3$,~\eqref{eq:fourierslicetheorem3} was already derived in~\cite{creeb94}.
\eor\end{rmk}
\begin{prop}\label{thm:adjoint}
The cone transforms $C$ and $\mathcal C$ are self-adjoint, in the sense that for $f,g\in S_r(\RR^{n-1}\times\RR^{n-1})$ with $f(\xx,\yy)=\mathbf f(\xx,|\yy|)$ and $g(\uu,\vv)=\mathbf g(\uu,|\vv|)$,
\begin{equation}\label{eq:selfadjoint}
\displaystyle\intL_{\RR^{n-1}}\intL_{\RR^{n-1}}Cf(\uu,\vv)g(\uu,\vv)d\uu d\vv=\intL_{\RR^{n-1}}\intL_{\RR^{n-1}} f(\xx,\yy)C g(\xx,\yy)  d\xx d\yy
\end{equation}
and 
\begin{equation}\label{eq:selfadjoint2}
\displaystyle\intL_{\RR^{n-1}}\intL\half \mathcal C\mathbf f(\uu,s)\mathbf g(\uu,s)s^{n-2}ds d\uu=\intL_{\RR^{n-1}}\intL\half \mathbf f(\xx,z)\mathcal C \mathbf g(\xx,z) z^{n-2} dz d\xx.
\end{equation}
\end{prop}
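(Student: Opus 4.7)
My plan is to prove \eqref{eq:selfadjoint} directly via Fubini and a linear change of variables, and then derive \eqref{eq:selfadjoint2} from \eqref{eq:selfadjoint} by converting between polar and Cartesian integration in the second variable.

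For \eqref{eq:selfadjoint}, I would substitute the definition \eqref{eq:defiofcf} of $Cf$ on the left-hand side, apply Fubini's theorem, and make the translation $\xx=\uu+|\vv|\yy$ (with $\vv,\yy$ fixed) in the $\uu$-integral, which preserves Lebesgue measure. This rewrites the LHS as
$$
\intL_{\RR^{n-1}}\intL_{\RR^{n-1}}\intL_{\RR^{n-1}}f(\xx,\yy)\,g(\xx-|\vv|\yy,\vv)\,d\xx\,d\yy\,d\vv.
$$
On the right-hand side, substituting the definition of $Cg$ and swapping the order of integration yields
$$
\intL_{\RR^{n-1}}\intL_{\RR^{n-1}}\intL_{\RR^{n-1}}f(\xx,\yy)\,g(\xx+|\yy|\zz,\zz)\,d\xx\,d\yy\,d\zz.
$$

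The core step is then to match these two triple integrals by invoking the radiality assumption on $f$ and $g$ in their second argument. Passing to polar coordinates $\yy=s\boldsymbol\theta$ and $\vv=r\boldsymbol\omega$ (respectively $\zz=r\boldsymbol\omega$) and using $f(\xx,\yy)=\mathbf f(\xx,s)$ together with $g(\cdot,\vv)=\mathbf g(\cdot,r)$, both sides become
$$
\intL_{\RR^{n-1}}\intL_0^\infty\intL_0^\infty\intL_{S^{n-2}}\intL_{S^{n-2}}\mathbf f(\xx,s)\,\mathbf g(\xx\pm rs\boldsymbol\theta,r)\,r^{n-2}s^{n-2}\,dS(\boldsymbol\omega)\,dS(\boldsymbol\theta)\,dr\,ds\,d\xx,
$$
with opposite signs (after relabeling $\boldsymbol\omega\leftrightarrow\boldsymbol\theta$ in one of them). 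The sign discrepancy is absorbed by the antipodal involution $\boldsymbol\theta\mapsto-\boldsymbol\theta$ on $S^{n-2}$, which preserves $dS$; for $n=2$ the sphere $S^0=\{\pm1\}$ makes this symmetry just the sum of the two branches already built into the $V$-line definition of $\mathcal C\mathbf f$.

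Once \eqref{eq:selfadjoint} is established, I would deduce \eqref{eq:selfadjoint2} using the identities $Cf(\uu,\vv)=\mathcal C\mathbf f(\uu,|\vv|)$ and $g(\uu,\vv)=\mathbf g(\uu,|\vv|)$: in spherical coordinates,
$$
\intL_{\RR^{n-1}}\intL_{\RR^{n-1}}Cf(\uu,\vv)g(\uu,\vv)\,d\uu\,d\vv=|S^{n-2}|\intL_{\RR^{n-1}}\intL_0^\infty\mathcal C\mathbf f(\uu,s)\mathbf g(\uu,s)\,s^{n-2}\,ds\,d\uu,
$$
and the same conversion applied to the right-hand side of \eqref{eq:selfadjoint} matches it with $|S^{n-2}|$ times the right-hand side of \eqref{eq:selfadjoint2}, so dividing by $|S^{n-2}|$ finishes the proof. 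No real obstacle is anticipated; the only delicate point is the asymmetric appearance of $|\vv|\yy$ versus $|\yy|\vv$ in the two instances of the cone transform, and this is precisely what radiality combined with the antipodal symmetry of $S^{n-2}$ is designed to resolve.
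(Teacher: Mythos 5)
Your proposal is correct and follows essentially the same route as the paper: substitute the definition of $Cf$, apply Fubini, translate $\uu\mapsto\xx-|\vv|\yy$, and then use radiality in the second variable to reconcile the resulting integrand with $Cg$. In fact you are slightly more thorough than the paper, which handles the sign via $\yy\to-\yy$ and radiality of $f$ but leaves the identification of $g(\xx+\yy|\vv|,\vv)$ with $g(\xx+|\yy|\vv,\vv)$ implicit (your explicit polar-coordinate relabeling $\boldsymbol\omega\leftrightarrow\boldsymbol\theta$ supplies exactly that missing step), and the paper omits the reduction of \eqref{eq:selfadjoint2} to \eqref{eq:selfadjoint} that you carry out by cancelling the $|S^{n-2}|$ factors.
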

%We want to define a dual operator $C^\#$ to the cone transform $C$.
\begin{proof}
We start out from
\begin{equation*}
\begin{split}
\displaystyle\intL_{\RR^{n-1}}\intL_{\RR^{n-1}}Cf(\uu,\vv)g(\uu,\vv)d\uu d\vv&\displaystyle=\intL_{\RR^{n-1}}\intL_{\RR^{n-1}}\intL_{\RR^{n-1}}f(\uu+|\vv|\yy,\yy)d\yy\; g(\uu,\vv)d\uu d\vv\\
&\displaystyle=\intL_{\RR^{n-1}}\intL_{\RR^{n-1}}f(\xx,\yy)\intL_{\RR^{n-1}} g(\xx-\yy|\vv|,\vv)d\vv d\yy d\xx\\
&\displaystyle=\intL_{\RR^{n-1}}\intL_{\RR^{n-1}}f(\xx,-\yy)\intL_{\RR^{n-1}} g(\xx+\yy|\vv|,\vv)d\vv d\yy d\xx,
\end{split}
\end{equation*}
where in the last line, we changed the variables $\yy\to-\yy$.
Since $f$ is a radial function in $\yy$, we have
\begin{equation*}
\displaystyle\intL_{\RR^{n-1}}\intL_{\RR^{n-1}}Cf(\uu,\vv)g(\uu,\vv)d\uu d\vv
\displaystyle=\intL_{\RR^{n-1}}\intL_{\RR^{n-1}}f(\xx,\yy)\intL_{\RR^{n-1}} g(\xx+\yy|\vv|,\vv)d\vv d\yy d\xx,
\end{equation*}
which is our assertion.
%for $g\in\mathcal S_r(\RR^{n-1}\times\RR^{n-1})$.
%Since $f(\xx,\yy)=\mathbf f(\xx,|\yy|)$ and $g(\uu,\vv)=\mathbf g(\uu,|\vv|)$ for functions $\mathbf f$ and $\mathbf g$ on $\RR^n$, changing the variables $\yy\to r\boldsymbol\theta$ and $\vv\to \rho\boldsymbol\omega$, for $(r,\boldsymbol\theta),(\rho,\boldsymbol\omega)\in[0,\infty)\times S^{n-2}$, yields
%\begin{equation*}
%\begin{split}
%\displaystyle\intL_{\RR^{n-1}}\intL_{\RR^{n-1}}Cf(\uu,\vv)g(\uu,\vv)d\uu d\vv&\displaystyle=\intL_{S^{n-2}}\intL\half\intL_{\RR^{n-1}}\mathbf f(\xx,r)r^{n-2}\intL_{S^{n-2}}\intL\half \mathbf g(\xx-r\rho\boldsymbol\theta,\rho)\rho^{n-2}d\rho dS(\boldsymbol\omega) d\xx drdS(\boldsymbol\theta)\\
%&\displaystyle=\intL_{\RR^{n-1}}\intL_{\RR^{n-1}} f(\xx,\yy)Cg(\xx,\yy)  d\xx d\yy,
%\end{split}
%\end{equation*}
%where in the last line, we changed the variables $ r\boldsymbol\omega\to\yy$ and $ -\rho\boldsymbol\theta\to\vv$.
%Equation~\eqref{eq:selfadjoint2} follows from equation~\eqref{eq:selfadjoint} and changing the variable $\yy\to z\boldsymbol\theta$ and $\vv\to s\boldsymbol\theta$.
\end{proof}

\begin{prop}
For $f,g\in\mathcal S_r(\RRn),$ we have
$$
C(f*g)=Cf*Cg.
$$
\end{prop}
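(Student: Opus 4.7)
The plan is to apply the Fourier slice theorem (Theorem~\ref{lem:fourierslicetheorem}) on both sides and exploit the fact that the Fourier transform converts convolution into multiplication. Because $\mathcal{F}_1(Cf)(\boldsymbol\xi,\vv) = \mathcal{F}f(\boldsymbol\xi,|\vv|\boldsymbol\xi)$ is just a slice of the full Fourier transform $\mathcal{F}f$, the identity $C(f*g) = Cf * Cg$ should follow by checking that both sides have the same $(n{-}1)$-dimensional Fourier transform in $\uu$.

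First I would verify that $f*g \in \mathcal{S}_r(\RRn)$ when $f,g \in \mathcal{S}_r(\RRn)$, so that Theorem~\ref{lem:fourierslicetheorem} is legitimately applicable to $C(f*g)$. This is done by an orthonormal change of variables inside the convolution integral, using the radial symmetry of $f$ and $g$ in the second $\RR^{n-1}$-factor. With that in hand, take the $(n{-}1)$-dimensional Fourier transform of $C(f*g)$ in $\uu$. By Theorem~\ref{lem:fourierslicetheorem} combined with the standard convolution theorem for $\mathcal{F}$ on $\RR^{2(n-1)}$, one obtains
$$
\mathcal{F}_1(C(f*g))(\boldsymbol\xi,\vv) \;=\; \mathcal{F}(f*g)(\boldsymbol\xi,|\vv|\boldsymbol\xi) \;=\; \mathcal{F}f(\boldsymbol\xi,|\vv|\boldsymbol\xi)\,\mathcal{F}g(\boldsymbol\xi,|\vv|\boldsymbol\xi).
$$
Applying Theorem~\ref{lem:fourierslicetheorem} again in reverse, the right side equals $\mathcal{F}_1(Cf)(\boldsymbol\xi,\vv)\,\mathcal{F}_1(Cg)(\boldsymbol\xi,\vv)$, and by the ordinary convolution theorem in $\uu$ this is exactly $\mathcal{F}_1(Cf * Cg)(\boldsymbol\xi,\vv)$, where $*$ on the right is read as convolution in the $\uu$ variable with $\vv$ treated as a parameter. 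Injectivity of the Fourier transform then closes the argument.

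I do not foresee any real obstacle. The one conceptual point to pin down is the meaning of the $*$ on the right: since $\mathcal{F}_1(Cf)(\boldsymbol\xi,\vv)$ depends on $\vv$ only through $|\vv|$, a full convolution in $(\uu,\vv)$ would mix $|\vv'|$ with $|\vv-\vv'|$ in a way that is incompatible with the slice formula, so $*$ on the right must be interpreted as convolution in $\uu$ alone. As a self-contained alternative that avoids any appeal to Fourier analysis, one can expand $C(f*g)(\uu,\vv)$ as a triple integral, perform the change of variables $\yy\mapsto\yy+\yy'$ to absorb the translation in the second slot, and then shift $\xx'\mapsto\xx'+|\vv|\yy'$ to decouple the $\yy$- and $\yy'$-integrals into the $Cf$ and $Cg$ factors, respectively.
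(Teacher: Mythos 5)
Your argument is correct and follows essentially the same route as the paper, whose entire proof is the remark that the identity follows from the Fourier slice theorem (Theorem~\ref{lem:fourierslicetheorem}) together with $\mathcal F(f*g)=(\mathcal F f)(\mathcal F g)$. Your additional observation that the convolution on the right-hand side must be read as convolution in $\uu$ alone, with $\vv$ as a parameter (a full convolution in $(\uu,\vv)$ would produce a $\vv$-convolution rather than a product on the Fourier side), is a correct and worthwhile clarification that the paper leaves implicit, and your direct change-of-variables computation confirms it.
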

This theorem follows from Theorem~\ref{lem:fourierslicetheorem} and $\mathcal F_{{}}(f*g)=(\mathcal F_{{}}f)(\mathcal F_{{}}g)$.

By the definition of $Cf$, we notice that $Cf$ is the integral of $f$ over an $n-1$-dimensional plane. 
It is a natural idea that $Cf$ can be converted to the $2(n-1)$-dimensional regular Radon transform by an $n-2$-dimensional integration.
(Indeed, we showed that the 2-dimensional cone transform is converted to the 2-dimensional regular Radon transform in Remark~\ref{rmk:2drelation}.)
Let the regular Radon transform be defined by 
$$
Rf(\boldsymbol\omega,t)=\intL_{\boldsymbol\omega^\perp}f(t\boldsymbol\omega+\boldsymbol\tau)d\boldsymbol\tau\qquad\mbox{for } (\boldsymbol\omega,t)\in S^{n-1}\times\RR.
$$
Then this can be represented by
\begin{equation}\label{eq:changevariableofradon}
Rf(\boldsymbol\omega,t)=\sqrt{1+|\boldsymbol\omega'/\omega_1|^2}\intL_{\RR^{n-1}}f\left(-\frac{\boldsymbol\omega'\cdot\boldsymbol\tau}{\omega_1}+t\sqrt{1+|\boldsymbol\omega'/\omega_1|^2},\boldsymbol\tau\right)d\boldsymbol\tau
\end{equation}
for $\omega_1\neq0$.
Here $\boldsymbol\omega=(\omega_1,\omega_2,\cdots,\omega_{n-1})=(\omega_1,\boldsymbol\omega')\in S^{n-1}$ and $\boldsymbol\tau\in\RR^{n-1}$.

Now we convert the $4$-dimensional cone transform $Cf$ to the 4-dimensional regular Radon transform.
For $(a,b)\in\RR^2$, we integrate $Cf(au_2+b,u_2,\vv)$ with respect to $u_2$:
\begin{equation*}
\begin{split}
\intR Cf(au_2+b,u_2,\vv)du_2&=\intR \intRR f(au_2+b+|\vv|y_1,u_2+|\vv|y_2,\yy)d\yy du_2\\
&=\intRR \intR  f(au_2+|\vv|y_1-a|\vv|y_2+b,u_2,\yy)du_2d\yy,
\end{split}
\end{equation*}
which is equivalent to
\begin{equation}\label{eq:relation}
\begin{split}
\intR Cf(au_2+b,u_2,\vv)du_2=
((1+a^2)(1+|\vv|^2))^{-1/2}Rf\left(\frac{(-1,a,|\vv|,-a|\vv|)}{\sqrt{(1+a^2)(1+|\vv|^2)}},\frac{b}{\sqrt{(1+a^2)(1+|\vv|^2)}}\right).
\end{split}
\end{equation}
(see \eqref{eq:changevariableofradon}).
Since $f(\xx,\yy)$ is radial in $\yy$, we have for any $\boldsymbol\alpha=(\alpha_1,\alpha_2)\in\RR^2$,
$$
f(au_2+\alpha_1 y_1+\alpha_2 y_2+b,u_2,\yy)=f(au_2+\alpha_1 y_1+\alpha_2 y_2+b,u_2,U_{11}y_1+U_{12}y_2,U_{21}y_1+U_{22}y_2),
$$
where $U=(U_{\{i,j\}})$ is any 2$\times2$ orthogonal matrix.
%Set $U_{11}=\alpha_1/|\alpha|$ and $U_{12}=-\alpha_2/|\alpha|$.
%Then $U_{21}=-\alpha_2/|\alpha|$ and $U_{22}=\alpha_1/|\alpha|$.
Then, we have
\begin{equation}\label{eq:aux}
\begin{split}
&\intRR \intR  f(au_2+\alpha_1 y_1+\alpha_2 y_2+b,u_2,\yy)du_2d\yy\\=
&\intRR \intR  f(au_2+\alpha_1 y_1+\alpha_2 y_2+b,u_2,U_{11}y_1+U_{12}y_2,U_{21}y_1+U_{22}y_2)du_2d\yy\\=
&\intRR \intR  f(au_2+(\alpha_1U_{11}+\alpha_2U_{12}) y_1+(\alpha_1U_{21}+\alpha_2U_{22})y_2+b,u_2,\yy)du_2d\yy,
\end{split}
\end{equation}
where we changed the variables $U\yy\to\yy$.
%
%$$
%Rf\left(\frac{(-1,a,|\vv|,-a|\vv|)}{\sqrt{(1+a^2)(1+|\vv|^2)}},\frac{b}{\sqrt{(1+a^2)(1+|\vv|^2)}}\right)=Rf\left(\frac{(-1,a,\alpha_1,\alpha_2)}{\sqrt{(1+a^2)(1+|\vv|^2)}},\frac{b}{\sqrt{(1+a^2)(1+|\vv|^2)}}\right)
%$$
%for $|\alpha|^2=|\vv|^2(1+a^2)$.
Combining two equations~\eqref{eq:relation} and \eqref{eq:aux}, we have the following relation: for $|\boldsymbol\alpha|^2=|\vv|^2(1+a^2)$,
\begin{equation*}
\begin{split}
&\intR Cf(au_2+b,u_2,\vv)du_2\\
&=((1+a^2)(1+|\vv|^2))^{-1/2}Rf\left(\frac{(-1,a,\alpha_1,\alpha_2)}{\sqrt{(1+a^2)(1+|\vv|^2)}},\frac{b}{\sqrt{(1+a^2)(1+|\vv|^2)}}\right).
\end{split}
\end{equation*}
By a similar argument, we have the following theorem:
\begin{prop}\label{thm:relation}
Let $f\in S_r(\RRn)$ and $(\boldsymbol\alpha,\aaa,b)\in\RR^{n-1}\times\RR^{n-2}\times\RR$. Then we have for $|\boldsymbol\alpha|=|\vv|^2(1+|\aaa|^2)$ and $\uu'=(u_2,u_3,\cdots,u_{n-1})\in\RR^{n-2}$,
\begin{equation}\label{eq:relation1}
\begin{split}
&\intL_{\RR^{n-2}} Cf(\aaa\cdot \uu'+b,\uu',\vv)d\uu'\\
&=((1+|\aaa|^2)(1+|\vv|^2))^{-1/2}Rf\left(\frac{(-1,\aaa,\boldsymbol\alpha)}{\sqrt{(1+|\aaa|^2)(1+|\vv|^2)}},\frac{b}{\sqrt{(1+|\aaa|^2)(1+|\vv|^2)}}\right).
\end{split}
\end{equation}
%Here $\uu'\in\RR^{n-2}$ and $\boldsymbol\alpha\in\RR^{n-1}$.
\end{prop}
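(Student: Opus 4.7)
The proof should be the natural generalization of the $n=4$ calculation worked out in the paragraph preceding the proposition statement. The plan is to expand $Cf$ via its definition \eqref{eq:defiofcf}, perform a translation in the variable $\uu'$ so that the first argument of $f$ becomes a linear function of all integration variables plus the constant $b$, invoke the radial symmetry in $\yy$ to generalize the coefficient vector, and finally recognize what remains as an instance of formula \eqref{eq:changevariableofradon} for $R$ in dimension $2(n-1)$.

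Concretely, I would write $\yy=(y_1,\yy')\in\RR\times\RR^{n-2}$ and start from
\begin{equation*}
\intL_{\RR^{n-2}}Cf(\aaa\cdot\uu'+b,\uu',\vv)\,d\uu'
=\intL_{\RR^{n-2}}\intL_{\RR^{n-1}}f(\aaa\cdot\uu'+b+|\vv|y_1,\,\uu'+|\vv|\yy',\,\yy)\,d\yy\,d\uu'.
\end{equation*}
Substituting $\uu'\mapsto\uu'-|\vv|\yy'$ and switching the order of integration yields
\begin{equation*}
\intL_{\RR^{n-1}}\intL_{\RR^{n-2}}f\!\left(\aaa\cdot\uu'+(|\vv|,-|\vv|\aaa)\cdot\yy+b,\,\uu',\,\yy\right)d\uu'\,d\yy,
\end{equation*}
where $(|\vv|,-|\vv|\aaa)\in\RR^{n-1}$ has norm $|\vv|\sqrt{1+|\aaa|^{2}}$.

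Next I would apply the radial symmetry of $f$ in $\yy$, exactly as was done in \eqref{eq:aux}: for any orthogonal $U$ on $\RR^{n-1}$ one may replace $\yy$ by $U\yy$ in the integrand without changing the integral, and hence the fixed vector $(|\vv|,-|\vv|\aaa)$ may be replaced by any $\boldsymbol\alpha\in\RR^{n-1}$ of the same length. This is exactly the constraint $|\boldsymbol\alpha|^{2}=|\vv|^{2}(1+|\aaa|^{2})$ appearing in the statement (the exponent $2$ on $|\boldsymbol\alpha|$ is missing in the displayed line but is forced by this computation). So the integral equals
\begin{equation*}
\intL_{\RR^{2n-3}}f\!\left((\aaa,\boldsymbol\alpha)\cdot(\uu',\yy)+b,\,(\uu',\yy)\right)d(\uu',\yy).
\end{equation*}

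Finally, I would read off the proposition from \eqref{eq:changevariableofradon} applied in dimension $2(n-1)$. With $\boldsymbol\omega=(-1,\aaa,\boldsymbol\alpha)/\sqrt{(1+|\aaa|^{2})(1+|\vv|^{2})}$, one checks that $|\boldsymbol\omega|^{2}=1$ because $|(-1,\aaa,\boldsymbol\alpha)|^{2}=1+|\aaa|^{2}+|\vv|^{2}(1+|\aaa|^{2})=(1+|\aaa|^{2})(1+|\vv|^{2})$; that $-\boldsymbol\omega'/\omega_{1}=(\aaa,\boldsymbol\alpha)$; and that $\sqrt{1+|\boldsymbol\omega'/\omega_{1}|^{2}}=\sqrt{(1+|\aaa|^{2})(1+|\vv|^{2})}$, so the choice $t=b/\sqrt{(1+|\aaa|^{2})(1+|\vv|^{2})}$ makes $t\sqrt{1+|\boldsymbol\omega'/\omega_{1}|^{2}}=b$. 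Dividing \eqref{eq:changevariableofradon} by this square root delivers \eqref{eq:relation1}. The only step requiring genuine care is the bookkeeping at the end to confirm that the normalization of $\boldsymbol\omega$, the scaling factor in front of $R$, and the shift $b$ all match simultaneously; the substitution and the use of radial symmetry are straightforward and parallel the 4-dimensional case already written out.
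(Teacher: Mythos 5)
Your proof is correct and follows essentially the same route the paper takes: the paper works out the $2(n-1)=4$ case explicitly (translation in $\uu'$, the radial-symmetry replacement of $(|\vv|,-|\vv|\aaa)$ by any $\boldsymbol\alpha$ of equal norm as in \eqref{eq:aux}, then identification with \eqref{eq:changevariableofradon}) and simply asserts the general case ``by a similar argument,'' which is exactly the argument you carry out. Your observation that the constraint should read $|\boldsymbol\alpha|^{2}=|\vv|^{2}(1+|\aaa|^{2})$ is also right --- the missing square in the proposition statement is a typo, consistent with the displayed condition in the paper's preceding four-dimensional computation.
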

\begin{rmk}\label{rmk:inversion}
Taking an inversion formula for the regular Radon transform on~\eqref{eq:relation1}, $f$ can be recovered from $Cf$. % $\int_{\RR^{n-1}} Cf(\aaa \uu'+b,\uu',\vv)d\uu'$.
\eor\end{rmk}
%\begin{prop}
%The eigenfunction of the operator $C$ on $\mathcal S'_r(\RRn)$ is of the form $f(\uu,\vv)=h(\uu)\delta(\vv)$ for some $h\in S'(\RR^{n-1})$,
%where $\delta$ is the Dirac delta function.
%\end{prop}
%\begin{proof}
%Suppose $Cf=\lambda f$.
%By Theorem~\ref{lem:fourierslicetheorem}, we have
%$\mathcal F_{}f(\boldsymbol\xi,\yy|\boldsymbol\xi|)=\lambda \mathcal F_{1}f(\boldsymbol\xi,\yy)$.
%Since $\mathcal F_{}f(\boldsymbol\xi,\yy)$ and $\mathcal F_{1}f(\boldsymbol\xi,|\boldsymbol\xi|\yy)$ have radial symmetry on the second variable, $\mathcal F_{}f(\boldsymbol\xi,\yy|\boldsymbol\xi|)$ should be constant for any $\yy$.
%Since $\mathcal F_{1}\delta=1$ in the distribution sense, our proof is completed.
%\end{proof}
%%%%%%%%%%%%%%%%%%%%%%%%%%%%%%%%%%%%%%%%%%%%%%%%%%%%%%%%%%%%%
\section{Inversion formulas}\label{sec:inversion}
%%%%%%%%%%%%%%%%%%%%%%%%%%%%%%%%%%%%%%%%%%%%%%
Although we already show how to recover $f$ from $Cf$ in Remark~\ref{rmk:inversion}, we present two explicit inversion formulas for the cone transform in this section.
 
For $k<n$ we define the linear operator $I^k$ by
$$
\mathcal F_{{}}(I^k f)(\boldsymbol\xi)=|\boldsymbol\xi|^{-k}\mathcal F_{{}}f(\boldsymbol\xi).
$$
The linear operator $I^k$ is called the Riesz potential.
For $f\in\mathcal S(\RR^n)$, $\mathcal F_{{}}(I^k f)\in L^1(\RR^n)$, hence $I^k f$ makes sense and $I^{-k}I^k f=f$.
When $I^k_1$ is applied to functions on $\RR^{n-1}\times\RR^{n-1}$, it acts on the first $n-1$-dimensional variable $\uu$ or $\xx$. 
Also, for $f\in\mathcal S_r(\RRn)$, we have $\mathcal F_{1} (Cf)(\boldsymbol\xi,\vv)=\mathcal F_{}f(\boldsymbol\xi,\boldsymbol\xi|\vv|)$ by Theorem~\ref{lem:fourierslicetheorem}, so for a fixed $\vv$, $\mathcal F_{1} (Cf)(\boldsymbol\xi,\vv)\in\mathcal S(\RR^{n-1})$ and therefore, $I^k_1 (Cf)$ makes sense.
\begin{thm}\label{thm:inversionseries}
Let $f\in S_r(\RR^{n-1}\times\RR^{n-1})$ with $f(\xx,\yy)=\mathbf f(\xx,|\yy|)$.
For $k<n-1$, we have
\begin{equation}\label{eq:inversionseries}
f=(2\pi)^{1-n}I_1^{-k}C I_1^{k+1-n}F,\qquad F=Cf,
\end{equation}
and
\begin{equation*}%\label{eq:inversionseries1}
\mathbf f=(2\pi)^{1-n}I_1^{-k}\mathcal C I_1^{k+1-n}\mathbf F,\qquad \mathbf F=\mathcal C\mathbf f.
\end{equation*}
\end{thm}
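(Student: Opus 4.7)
The plan is to verify the identity by taking the $(n{-}1)$-dimensional Fourier transform in $\uu$ of the right-hand side and reducing it to $(2\pi)^{n-1}\mathcal F_1 f(\xxi,\yy)$; Fourier inversion in $\uu$ then closes the argument.

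Set $F=Cf$. Theorem~\ref{lem:fourierslicetheorem} gives $\mathcal F_1 F(\xxi,\vv)=\mathcal Ff(\xxi,\vv|\xxi|)$. Multiplying by the symbol $|\xxi|^{n-1-k}$ of $I_1^{k+1-n}$ and then replaying the slice-theorem computation from the proof of Theorem~\ref{lem:fourierslicetheorem} yields
\begin{equation*}
\mathcal F_1(CI_1^{k+1-n}F)(\xxi,\yy)=|\xxi|^{n-1-k}\intL_{\RR^{n-1}}\mathcal Ff(\xxi,\vv|\xxi|)e^{i|\yy|\vv\cdot\xxi}d\vv.
\end{equation*}
The key manipulation is the change of variables $\eeta=\vv|\xxi|$, which contributes a Jacobian $|\xxi|^{-(n-1)}$ and turns the phase into $i|\yy|\eeta\cdot\xxi/|\xxi|$; the resulting integral is $(2\pi)^{n-1}$ times the inverse Fourier transform in the second variable of $\mathcal Ff(\xxi,\cdot)$ evaluated at $|\yy|\xxi/|\xxi|$, that is, $\mathcal F_1 f(\xxi,|\yy|\xxi/|\xxi|)$. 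Radiality of $f$ in $\yy$, preserved by partial Fourier transformation, lets me replace the argument by $\yy$, so
\begin{equation*}
\mathcal F_1(CI_1^{k+1-n}F)(\xxi,\yy)=(2\pi)^{n-1}|\xxi|^{-k}\mathcal F_1 f(\xxi,\yy).
\end{equation*}

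Applying $I_1^{-k}$ removes the $|\xxi|^{-k}$ factor, and inverse Fourier transform in $\uu$ then yields $f=(2\pi)^{1-n}I_1^{-k}CI_1^{k+1-n}F$. The analogous formula for $\mathbf f$ is immediate from $Cf(\uu,\vv)=\mathcal C\mathbf f(\uu,|\vv|)$, $f(\xx,\yy)=\mathbf f(\xx,|\yy|)$, and the fact that $I_1$ and $C$ preserve radiality in the second variable.

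I expect the main obstacle to be the change-of-variables step combined with the radial substitution: this is precisely where the constant $(2\pi)^{n-1}$ enters and where the hypothesis $k<n-1$ is used to guarantee $|\xxi|^{n-1-k}$ is bounded near the origin so that $I_1^{k+1-n}F$ is a legitimate tempered function on which the second application of $C$ is well defined. The remainder is bookkeeping with the Fourier slice theorem.
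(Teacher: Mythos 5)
Your proposal is correct and uses essentially the same ingredients as the paper's proof --- the Fourier slice theorem, the substitution $\boldsymbol\eta=\vv|\boldsymbol\xi|$ with Jacobian $|\boldsymbol\xi|^{-(n-1)}$, and radiality in the second variable --- merely run in the opposite direction (you Fourier-transform the right-hand side $CI_1^{k+1-n}F$, while the paper expands $I_1^kf$ by Fourier inversion and identifies the result as $C I_1^{k+1-n}Cf$). Your bookkeeping of the constant $(2\pi)^{1-n}$ is consistent with the theorem as stated.
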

\begin{proof}
We start out from the Fourier inversion formula
\begin{equation*}
\begin{split}
I^k_1 f(\xx,\yy)&=\displaystyle(2\pi)^{2(1-n)}\intL_{\RR^{n-1}}\intL_{\RR^{n-1}}|\boldsymbol\xi|^{-k}\mathcal F_{}f(\boldsymbol\xi,\boldsymbol\eta)e^{i\xx\cdot\boldsymbol\xi}e^{i\yy\cdot\boldsymbol\eta}d\boldsymbol\xi d\boldsymbol\eta\\
&=\displaystyle(2\pi)^{2(1-n)}\intL_{\RR^{n-1}}\intL_{\RR^{n-1}}|\boldsymbol\xi|^{n-1-k}\mathcal F_{}f(\boldsymbol\xi,\vv|\boldsymbol\xi|)e^{i\xx\cdot\boldsymbol\xi}e^{i\yy\cdot\vv |\boldsymbol\xi|}d\boldsymbol\xi d\vv,
\end{split}
\end{equation*}
where in the last line, we changed the variables $\boldsymbol\eta\to\vv|\boldsymbol\xi|$.
Since $\mathcal F_{}f(\boldsymbol\xi,\boldsymbol\eta)$ and $I^k_1 f(\xx,\yy)$ are radial on $\boldsymbol\eta$ and $\yy$,
%Since $\mathcal F_{}f(\boldsymbol\xi,\boldsymbol\eta)$ and $I^k_1 f(\xx,\yy)$ are radial on $\boldsymbol\eta$ and $\yy$, respectively, we have
%$$
%I^k_1 f(\xx,\yy)=\displaystyle(2\pi)^{2(1-n)}\intL_{S^{n-2}}\intL\half\intL_{\RR^{n-1}}|\boldsymbol\xi|^{n-1-k}\mathcal F_{}f(\boldsymbol\xi,\boldsymbol\xi|\vv|)e^{i\xx\cdot\boldsymbol\xi}e^{i\boldsymbol\theta_\yy\cdot\boldsymbol\theta_\vv|\vv||\boldsymbol\xi||\yy|}|\vv|^{n-2}d\boldsymbol\xi d|\vv|dS(\boldsymbol\theta_\vv),
%$$
%where $\boldsymbol\theta_\vv=\vv/|\vv|\in S^{n-2}$ and $\boldsymbol\theta_\yy=\yy/|\yy|\in S^{n-2}$.
%Since for any $\boldsymbol\theta\in S^{n-2}$,
%$$
%\intL_{S^{n-2}}e^{i\boldsymbol\theta_\yy\cdot\boldsymbol\theta_\vv|\vv||\boldsymbol\xi||\yy|}dS(\boldsymbol\theta_\vv)=\intL_{S^{n-2}}e^{i\boldsymbol\theta\cdot\boldsymbol\theta_\vv |\vv||\boldsymbol\xi||\yy|}dS(\boldsymbol\theta_\vv),
%$$
we obtain
$$
I^k_1 f(\xx,\yy)=\displaystyle(2\pi)^{2(1-n)}\intL_{\RR^{n-1}}\intL_{\RR^{n-1}}|\boldsymbol\xi|^{n-1-k}\mathcal F_{}f(\boldsymbol\xi,\boldsymbol\xi|\vv|)e^{i\xx\cdot\boldsymbol\xi}e^{i\vv\cdot\boldsymbol\xi|\yy|}d\boldsymbol\xi d\vv,
$$
which is equivalent to
%Here we can express $\mathcal F_{}f(\boldsymbol\xi,\boldsymbol\xi|\vv|)$ as $\mathcal F_{1}(Cf)(\boldsymbol\xi,\vv)$ due to Theorem~\ref{lem:fourierslicetheorem}.
%We obtain
$$
I^k_1 f(\xx,\yy)=\displaystyle(2\pi)^{2(1-n)}\intL_{\RR^{n-1}}\intL_{\RR^{n-1}}|\boldsymbol\xi|^{n-1-k}\mathcal F_{1}(Cf)(\boldsymbol\xi,\vv)e^{i(\xx+|\yy|\vv)\cdot\boldsymbol\xi}d\boldsymbol\xi d\vv.
$$
Here we used Theorem~\ref{lem:fourierslicetheorem}. 
Since the inner integral can be represented by the Riesz potential, we have
\begin{equation*}
\begin{split}
I^k_1 f(\xx,\yy)&=\displaystyle(2\pi)^{2(1-n)}\intL_{\RR^{n-1}}(I^{k+1-n}_1Cf)(\xx+|\yy|\vv,\vv) d\vv\\
&=(2\pi)^{2(1-n)}C(I^{k+1-n}_1Cf)(\xx,\yy).
\end{split}
\end{equation*}
and the inversion formula for $C$ follows by applying $I^{-k}_1$.
\end{proof}

%\begin{thm}\label{thm:inversion}
%For $f\in S_r(\RR^{n-1}\times\RR^{n-1})$, we have
%$$
%f(\xx,\yy)=(2\pi)^{1-n}I_1^{1-n}C(Cf)(\xx,\yy).
%$$
%\end{thm}
%\begin{thm}\label{thm:commute}
%Let $f\in S_r(\RR^{n-1}\times\RR^{n-1})$.
%For $k<n$, we have
%$$
%I^k_1 Cf(\uu,\vv)=CI^k_1  f(\uu,\vv).
%$$
%\end{thm}
%\begin{proof}
%Using Theorem~\ref{lem:fourierslicetheorem}, we have
%\begin{equation*}
%\begin{split}
%\mathcal F_{1}(I^\gamma Cf)(\boldsymbol\xi,\vv)&=|\boldsymbol\xi|^{-\gamma}\mathcal F_{1}( Cf)(\boldsymbol\xi,\vv)=|\boldsymbol\xi|^{-\gamma}\mathcal F_{} f(\boldsymbol\xi,\vv|\boldsymbol\xi|)\\
%&=\mathcal F_{}(I^\gamma f)(\boldsymbol\xi,\vv|\boldsymbol\xi|)=\mathcal F_{1}(CI^\gamma f)(\boldsymbol\xi,\vv).
%\end{split}
%\end{equation*}
%\end{proof}
%Combining Theorems \ref{thm:inversion} and \ref{thm:commute}, we have the following inversion.

%We want to make three remarks.
%
%(a) When $\gamma=n-1$, Theorem~\ref{thm:inversionseries} is derived
%
\begin{rmk}
Putting $k=0$ in~\eqref{eq:inversionseries} yields
\begin{equation}\label{eq:inversionk0}
f=(2\pi)^{1-n}C I_1^{1-n}F,\qquad F=Cf.
\end{equation}
When $n$ is odd, %putting $\gamma=n-1$ in equation~\eqref{eq:inversionseries} yields $f=(2\pi)^{1-n}(-1)^{\frac{n-1}2}C \triangle_\uu^{\frac{n-1}2} F,$ i.e.,
equation~\eqref{eq:inversionk0} is actually equivalent to
$$
f(\xx,\yy)=(2\pi)^{1-n}(-1)^{\frac{n-1}2}\intL_{\RR^{n-1}} \triangle_\uu^{\frac{n-1}2} F(\xx+|\yy|\vv,\vv)d\vv,
$$
where $\triangle_\uu$ is the Laplacian operator with respect to $\uu$.
Thus the problem of reconstructing a function from its integrals over cones is local in odd dimensions, in the sense that computing the function at a point $(\xx,\yy)$ needs the integrals over cones passing through neighborhood of that point $(\xx,\yy)$.
On the other hand, when $n$ is even, the inversion~\eqref{eq:inversionk0} is nonlocal because the fractional Laplacian is nonlocal.
\eor\end{rmk}

%\begin{cor}
%Let a function $\mathbf f$ on $\RR^{n-1}\times[0,\infty)$ with $\mathbf f(\xx,|\yy|)=f(\xx,\yy)\in \mathcal S_r(\RRn)$.
%For $\gamma<n$, we have
%$$
%\mathbf f=(2\pi)^{1-n}I_1^{1-n+\gamma}\mathcal C I_1^{-\gamma}\mathbf g,\qquad \mathbf g=\mathcal C\mathbf f.
%$$
%\end{cor}
Combining Proposition~\ref{thm:adjoint} and Theorem \ref{thm:inversionseries}, we have an analog of the Plancherel formula.
\begin{prop}\label{thm:plancherel}
Let $f,g\in S_r(\RRn)$ satisfy $f(\xx,\yy)=\mathbf f(\xx,|\yy|)$ and $g(\xx,\yy)=\mathbf g(\xx,|\yy|)$. For any $k<n-1$, we have
\begin{equation*}\label{eq:plancherel}
\intL_{\RR^{n-1}}\intL_{\RR^{n-1}}f(\xx,\yy)g(\xx,\yy)d\xx d\yy=(2\pi)^{1-n}\intL_{\RR^{n-1}}\intL_{\RR^{n-1}}I_1^{-k}Cf(\uu,\vv)I^{k+1-n}_1Cg(\uu,\vv)d\uu d\vv
\end{equation*}
and
\begin{equation*}\label{eq:plancherel}
\intL\half\intL_{\RR^{n-1}}\mathbf f(\xx,z)\mathbf g(\xx,z)z^{n-2}d\xx dz=(2\pi)^{1-n}\intL\half\intL_{\RR^{n-1}} I^{-k}_1\mathcal C\mathbf f(\uu,s)I^{k+1-n}_1\mathcal C\mathbf g(\uu,s)s^{n-2}d\uu ds.
\end{equation*}
\end{prop}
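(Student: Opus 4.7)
The plan is to apply the inversion formula of Theorem \ref{thm:inversionseries} to $g$, and then transfer operators between the two factors using the self-adjointness of the Riesz potential together with Proposition \ref{thm:adjoint} and a commutation relation between $C$ and $I_1^j$.

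Concretely, I would start from $g = (2\pi)^{1-n} I_1^{-k} C I_1^{k+1-n} Cg$, substitute into $\int\int fg\,d\xx d\yy$, and then execute three transfers in order. First, move $I_1^{-k}$ off $g$ onto $f$: since $I_1^{-k}$ is a Fourier multiplier (in the $\xx$-variable) with real, even symbol $|\boldsymbol\xi|^{k}$, the standard Parseval argument gives $\int (I_1^{-k}h_1) h_2\, d\xx = \int h_1 (I_1^{-k}h_2)\, d\xx$, which legitimizes this transfer. Second, transfer the outer $C$ via Proposition \ref{thm:adjoint}; note that $I_1^{-k}f\in\mathcal S_r(\RRn)$ because $I_1^{-k}$ acts only in $\xx$ and therefore preserves radiality in $\yy$. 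This brings the identity to
\begin{equation*}
\intL_{\RR^{n-1}}\intL_{\RR^{n-1}} fg\,d\xx d\yy = (2\pi)^{1-n}\intL_{\RR^{n-1}}\intL_{\RR^{n-1}}[C(I_1^{-k}f)](\uu,\vv)\,[I_1^{k+1-n}Cg](\uu,\vv)\,d\uu d\vv.
\end{equation*}
Third, commute $C$ and $I_1^{-k}$: by the Fourier slice theorem (Theorem \ref{lem:fourierslicetheorem}),
\begin{equation*}
\mathcal F_1[C(I_1^{-k}f)](\boldsymbol\xi,\vv) = |\boldsymbol\xi|^{k}\mathcal F f(\boldsymbol\xi,|\vv|\boldsymbol\xi)=|\boldsymbol\xi|^{k}\mathcal F_1(Cf)(\boldsymbol\xi,\vv)=\mathcal F_1[I_1^{-k}Cf](\boldsymbol\xi,\vv),
\end{equation*}
so $C I_1^{-k}f = I_1^{-k}Cf$. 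Substituting yields the first identity.

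For the second identity with $\mathbf f$ and $\mathbf g$, I would reduce to the first via the radial structure in $\yy$ and $\vv$. Since $f(\xx,\yy)=\mathbf f(\xx,|\yy|)$, $g(\xx,\yy)=\mathbf g(\xx,|\yy|)$ and $Cf(\uu,\vv)=\mathcal C\mathbf f(\uu,|\vv|)$ (likewise for $g$), and since $I_1^{j}$ acts only on the first variable and hence preserves radiality in the second, passing to spherical coordinates with $d\yy=|S^{n-2}|r^{n-2}dr$ and $d\vv=|S^{n-2}|s^{n-2}ds$ converts both sides of the first identity into $|S^{n-2}|$ times the corresponding sides of the second; cancelling this common factor finishes the proof.

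The main (mild) obstacle is bookkeeping—keeping track of which variable each operator acts on—and the verification of the commutation $C I_1^{-k}=I_1^{-k}C$, which is the one nontrivial ingredient beyond the already-established self-adjointness and inversion formulas. Once that commutation is in hand, everything else is a routine application of Parseval and Fubini on the Schwartz class.
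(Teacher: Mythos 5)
Your proposal is correct and uses essentially the same ingredients and route as the paper: insert the inversion formula of Theorem~\ref{thm:inversionseries} for $g$, transfer the outer $C$ by Proposition~\ref{thm:adjoint}, and redistribute the powers of $|\boldsymbol\xi|$ via Parseval. The only (cosmetic) difference is that the paper inserts the $k=0$ inversion $g=(2\pi)^{1-n}CI_1^{1-n}Cg$ and splits the multiplier as $|\boldsymbol\xi|^{k}|\boldsymbol\xi|^{-k}$ at the Parseval stage, whereas you insert the general-$k$ inversion and instead invoke the commutation $CI_1^{-k}=I_1^{-k}C$ (correctly justified by Theorem~\ref{lem:fourierslicetheorem}); these are the same computation performed in a different order, and your explicit reduction of the $\mathcal C\mathbf f$ identity to the $Cf$ identity by polar coordinates fills in a step the paper leaves implicit.
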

\begin{proof}
By Theorem~\ref{thm:inversionseries} and the Plancherel formula, we have
\begin{equation*}\label{eq:isometry}
\begin{split}
\intL_{\RR^{n-1}}\intL_{\RR^{n-1}}f(\xx,\yy)g(\xx,\yy)d\xx d\yy&\displaystyle=(2\pi)^{1-n}\intL_{\RR^{n-1}}\intL_{\RR^{n-1}}f(\xx,\yy)CI^{1-n}_1Cg(\xx,\yy)d\xx d\yy\\
%&\displaystyle=(2\pi)^{2-2n}\intL_{\RR^{n-1}}\intL_{\RR^{n-1}}\mathcal F_1f(\boldsymbol\xi,\yy)|\boldsymbol\xi|^{k}\mathcal F_1(CI^{k+1-n}_1Cg)(\boldsymbol\xi,\yy)d\boldsymbol\xi d\yy\\
&\displaystyle=(2\pi)^{1-n}\intL_{\RR^{n-1}}\intL_{\RR^{n-1}}Cf(\xx,\yy) I^{1-n}_1Cg(\xx,\yy)d\xx d\yy\\
&\displaystyle=(2\pi)^{2-2n}\intL_{\RR^{n-1}}\intL_{\RR^{n-1}}|\xxi|^k\mathcal F_1(Cf)(\xxi,\yy) |\xxi|^{-k}\mathcal F_1(I^{1-n}_1Cg)(\xxi,\yy)d\xxi d\yy.
\end{split}
\end{equation*}
Here in the second line, we used Proposition~\ref{thm:adjoint}.
\end{proof}

A completely different inversion formula for the cone transform is derived by expanding $\mathcal F_{1}f$ in spherical harmonics,
$$
\mathcal F_{1} f(\varrho\boldsymbol\varphi,\yy)=\displaystyle\sum^\infty_{l=0} \sum^{N(n-1,l)}_{k=0}(\mathcal F_{1} f)_{kl}(\varrho,\yy)Y_{lk}(\boldsymbol\varphi),$$
and
$$
\mathcal F_{} f(\varrho\boldsymbol\varphi,\boldsymbol\eta)=\displaystyle\sum^\infty_{l=0} \sum^{N(n-1,l)}_{k=0}(\mathcal F_{} f)_{kl}(\varrho,\boldsymbol\eta)Y_{lk}(\boldsymbol\varphi),
$$
where $Y_{lk}(\boldsymbol\omega)$ for $\boldsymbol\omega\in S^{n-2}$ are spherical harmonics and %$\beta=(\beta^1,\beta^2)=(\beta_1,\beta_2,\cdots,\beta_{n-1})\in [0,2\pi)\times [0,\pi]^{n-2}$,  and
$$
N(n-1,l)=\displaystyle\frac{(2l+n-3)(n+l-4)}{l!(n-3)!},\qquad N(n-1,0)=1.
$$
Notice that
$$
\intL_{\RR^{n-1}}(\mathcal F_{1} f)_{kl}(\varrho,\yy)e^{-i\yy\cdot\boldsymbol\eta}d\yy=(\mathcal F_{} f)_{kl}(\varrho,\boldsymbol\eta).
$$
From Theorem~\ref{lem:fourierslicetheorem}, we have the following relation between $(\mathcal F_{} f)_{kl}$ and $(\mathcal F_{1} F)_{kl}$, where $F=Cf$:
\begin{equation}\label{eq:fourierfandfourier1f}
(\mathcal F_{} f)_{kl}(\varrho,\vv\varrho)=(\mathcal F_{1} F)_{kl}(\varrho,\vv).
\end{equation}

Taking the inverse Fourier transform with respect to $\vv$, we have the following theorem:
\begin{thm}
Let $f\in \mathcal S_r(\RRn)$.
If $F=Cf$, then we have 
$$
(\mathcal F_{1} f)_{kl}(\varrho,\yy)=\frac{\varrho^{n-1}}{(2\pi)^{n-1}}(\mathcal F_{} F)_{kl}(\varrho, \yy\varrho).
$$
\end{thm}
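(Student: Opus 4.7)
The plan is to apply an inverse Fourier transform in the second variable to the spherical-harmonic relation \eqref{eq:fourierfandfourier1f}, then to rescale so that the substitution $\boldsymbol\eta=\vv\varrho$ produces the factor $\varrho^{n-1}$ and the radial symmetry of $F=Cf$ in $\vv$ converts the forward-Fourier definition of $(\mathcal F F)_{kl}$ into the inverse transform that naturally appears.

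First I would use that expansion in spherical harmonics commutes with the Fourier transform in the second variable, which gives $(\mathcal F f)_{kl}(\varrho,\boldsymbol\eta)=\int_{\RR^{n-1}}(\mathcal F_1 f)_{kl}(\varrho,\yy)e^{-i\yy\cdot\boldsymbol\eta}d\yy$ (already recorded in the excerpt), and invert it as
$(\mathcal F_1 f)_{kl}(\varrho,\yy)=(2\pi)^{-(n-1)}\int_{\RR^{n-1}}(\mathcal F f)_{kl}(\varrho,\boldsymbol\eta)e^{i\yy\cdot\boldsymbol\eta}d\boldsymbol\eta$.
Next I would change variables $\boldsymbol\eta=\vv\varrho$ (valid for $\varrho>0$), which introduces the Jacobian $\varrho^{n-1}$, and then apply \eqref{eq:fourierfandfourier1f} to replace $(\mathcal F f)_{kl}(\varrho,\vv\varrho)$ by $(\mathcal F_1 F)_{kl}(\varrho,\vv)$, yielding
$(\mathcal F_1 f)_{kl}(\varrho,\yy)=\frac{\varrho^{n-1}}{(2\pi)^{n-1}}\int_{\RR^{n-1}}(\mathcal F_1 F)_{kl}(\varrho,\vv)e^{i(\yy\varrho)\cdot\vv}d\vv$.

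Finally I would invoke that $F(\uu,\vv)=Cf(\uu,\vv)=\mathcal C\mathbf f(\uu,|\vv|)$ is radial in $\vv$, so $(\mathcal F_1 F)_{kl}(\varrho,\cdot)$ is radial on $\RR^{n-1}$; consequently the integral is invariant under $\vv\mapsto-\vv$, so the exponent may be replaced by $e^{-i(\yy\varrho)\cdot\vv}$. This last integral is by definition the Fourier transform in the second variable of $(\mathcal F_1 F)_{kl}$ at the point $\yy\varrho$, which equals $(\mathcal F F)_{kl}(\varrho,\yy\varrho)$, giving the stated identity.

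The only delicate step is the sign flip in the exponential, which matches the convention chosen in the proof of Theorem~\ref{lem:fourierslicetheorem}; but since this follows immediately from the $\vv$-radial nature of the cone transform, the argument is essentially a bookkeeping exercise around \eqref{eq:fourierfandfourier1f}, and no further technical input is needed.
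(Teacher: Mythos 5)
Your proof is correct and follows essentially the same route as the paper: invert the Fourier transform in the second variable, substitute $\boldsymbol\eta=\vv\varrho$ to produce the Jacobian $\varrho^{n-1}$, apply \eqref{eq:fourierfandfourier1f}, and identify the remaining integral as $(\mathcal F F)_{kl}(\varrho,\yy\varrho)$. The only difference is that you make explicit the sign flip in the exponential via the $\vv$-radiality of $Cf$, a detail the paper passes over silently.
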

\begin{proof}
Taking the inverse Fourier transform of $(\mathcal F_{} f)_{kl}(\varrho,\vv\varrho)$ gives
\begin{equation}\label{eq:seriesofinversion}
\begin{split}
(\mathcal F_{1} f)_{kl}(\varrho,\yy)&=\frac{\varrho^{n-1}}{(2\pi)^{n-1}}\intL_{\RR^{n-1}}(\mathcal F_{} f)_{kl}(\varrho,\vv\varrho)e^{i\vv\varrho\cdot\yy}d\vv=\frac{\varrho^{n-1}}{(2\pi)^{n-1}}\intL_{\RR^{n-1}}(\mathcal F_{1} F)_{kl}(\varrho,\vv)e^{i\vv\varrho\cdot\yy}d\vv\\
&=\frac{\varrho^{n-1}}{(2\pi)^{n-1}}(\mathcal F_{} F)_{kl}(\varrho, \yy\varrho).
\end{split}
\end{equation}
where we used \eqref{eq:fourierfandfourier1f} in the second line.
\end{proof}

%As mentioned before, if $f$ is a radial function on $\RR^n$ and $f(\xx)=\mathbf f(|\xx|)$ , then $\mathcal F_{{}}f(\boldsymbol\xi)=(2\pi)^\frac n2|\boldsymbol\xi|^\frac{2-n}2\mathbf H_{\frac{n-2}2}\mathbf f(|\boldsymbol\xi|)$.
Because of \eqref{eq:fourierofradialfunction},~\eqref{eq:seriesofinversion} follows
\begin{equation}\label{eq:hankel}
(\mathcal F_{1} \mathbf f)_{kl}(\varrho,s)=\frac{\varrho^\frac{n+1}2s^\frac{3-n}2}{(2\pi)^{\frac{n-1}2}}\mathbf H_\frac{n-3}2(\mathcal F_{1} \mathbf F)_{kl}(\varrho,s\varrho),\quad\mathbf F=\mathcal C\mathbf f.
\end{equation}
\begin{rmk}
When $n=3$,~\eqref{eq:hankel} is already derived in~\cite{moonct14,nguyentg05}.
\eor\end{rmk}

%%%%%%%%%%%%%%%%%%%%
\section{The range}\label{sec:range}
%%%%%%%%%%%%%
In this section, we shall determine the range of the cone transform.
\begin{thm}\label{thm:range}
The cone transform $C$ is a injection of $\mathcal S_r(\RRn)$ into $\mathcal S_{r,c}(\RRn)$,
where
$$
\mathcal S_{r,c}(\RRn)=\left\{F\in\mathcal S_{r}(\RRn):\intL_{\RR^{n-1}}F(\uu,\vv)d\uu\mbox{ is a constant for any }\vv\in\RR^{n-1}\right\}.
$$
\end{thm}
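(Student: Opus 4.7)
The statement decomposes into four independent claims about $Cf$ for $f\in\mathcal S_r(\RRn)$: (i) $Cf$ is radial in $\vv$, (ii) $Cf$ is Schwartz, (iii) $\int_{\RR^{n-1}}Cf(\uu,\vv)d\uu$ is constant in $\vv$, and (iv) the map $C$ is injective. The plan is to dispatch (i), (iii) and (iv) quickly from material already proved, and to treat (ii) last, since it is by far the most delicate.

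Items (i) and (iii) come essentially for free. For (i), the identity $Cf(\uu,\vv)=\mathcal C\mathbf f(\uu,|\vv|)$ of Section~\ref{sec:formulation} shows that $Cf$ depends on $\vv$ only through $|\vv|$, so it is automatically invariant under orthogonal rotations of $\vv$. For (iii), Fubini together with the translation $\xx=\uu+|\vv|\yy$ yields
\begin{equation*}
\int_{\RR^{n-1}}Cf(\uu,\vv)d\uu=\int_{\RR^{n-1}}\int_{\RR^{n-1}}f(\xx,\yy)d\xx d\yy,
\end{equation*}
which is manifestly independent of $\vv$; equivalently, evaluating Theorem~\ref{lem:fourierslicetheorem} at $\xxi=0$ gives $\mathcal F_1(Cf)(0,\vv)=\mathcal Ff(0,0)$, a constant.

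For injectivity (iv), the cleanest route is to invoke the inversion formula of Theorem~\ref{thm:inversionseries}: the identity $f=(2\pi)^{1-n}I_1^{-k}CI_1^{k+1-n}Cf$ reconstructs $f$ from $Cf$, so $Cf\equiv 0$ forces $f\equiv 0$. A self-contained alternative uses the Fourier slice theorem together with the radiality of $\mathcal Ff$ in its second argument: $Cf=0$ gives $\mathcal Ff(\xxi,|\vv|\xxi)=0$ for every $(\xxi,\vv)\in\RRn$, and any $\eeta\in\RR^{n-1}$ can be written as $U\bigl((|\eeta|/|\xxi|)\xxi\bigr)$ for some orthogonal $U$, which by radiality forces $\mathcal Ff\equiv 0$ and hence $f\equiv 0$.

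The main obstacle is (ii). By Theorem~\ref{lem:fourierslicetheorem}, $\mathcal F_1(Cf)(\xxi,\vv)=\mathcal Ff(\xxi,|\vv|\xxi)$, which inherits rapid decay in $(\xxi,\vv)$ away from $\xxi=0$ and is smooth everywhere by the radial structure of $\mathcal Ff$ in the second slot; translating back to $(\uu,\vv)$ via inverse Fourier transform and controlling the resulting seminorms should give the required decrease in $\uu$ uniformly in $\vv$ together with smoothness in $\vv$. The subtle point, which I expect to require the most care, is the joint behaviour near $\xxi=0$ and large $|\vv|$: this interacts with (iii), since $\int Cf(\uu,\vv)d\uu$ is a nonzero constant in general, so $Cf$ cannot decay arbitrarily fast in $\vv$ alone. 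One would handle this by splitting the $(\xxi,\vv)$-plane into a region where $|\xxi||\vv|\gtrsim 1$, on which Schwartz decay of $\mathcal Ff$ gives everything directly, and a region $|\xxi||\vv|\lesssim 1$, on which a Taylor expansion of $\mathcal Ff(\xxi,|\vv|\xxi)$ around the second argument $0$ produces polynomial factors in $|\vv||\xxi|$ times smooth functions of $\xxi$; the resulting seminorm estimates, together with the constant captured by (iii), yield the claim.
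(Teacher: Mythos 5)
Your items (i), (iii) and (iv) are sound. (i) and (iii) coincide with the paper's argument (the paper likewise obtains (iii) by setting $\xxi=0$ in Theorem~\ref{lem:fourierslicetheorem}); for (iv) the paper's proof is actually silent and implicitly relies on the inversion formula, so your explicit appeal to Theorem~\ref{thm:inversionseries}, or the radiality argument via the Fourier slice theorem, is a welcome completion.

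The genuine gap is in (ii), and it is not one that more care on the region $|\xxi||\vv|\lesssim 1$ can close: the containment $Cf\in\mathcal S(\RR^{2(n-1)})$ is false for general $f\in\mathcal S_r(\RRn)$. Substituting $\zz=|\vv|\yy$ in \eqref{eq:defiofcf} gives
\begin{equation*}
Cf(\uu,\vv)=|\vv|^{1-n}\intL_{\RR^{n-1}}f(\uu+\zz,\zz/|\vv|)\,d\zz,
\end{equation*}
which behaves like $|\vv|^{1-n}\int_{\RR^{n-1}}f(\uu+\zz,0)\,d\zz$ as $|\vv|\to\infty$; thus $Cf$ decays only polynomially in $\vv$ unless $\int f(\xx,0)d\xx=0$ (and, iterating the expansion, unless all even $\yy$-moments vanish). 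Concretely, for $n=2$ and $f(x,y)=e^{-x^2-y^2}$ one computes $Cf(u,v)=\sqrt{\pi}\,(1+|v|^2)^{-1/2}e^{-u^2/(1+|v|^2)}$, so $Cf(0,v)\sim\sqrt{\pi}/|v|$, which is not rapidly decreasing. The ``interaction with (iii)'' you flag is in fact the whole story rather than a technicality: a Schwartz $F$ satisfies $\int_{\RR^{n-1}}F(\uu,\vv)d\uu\to0$ as $|\vv|\to\infty$, so if that integral equals the nonzero constant $\int\int f$, then $F=Cf$ cannot be Schwartz. Equivalently, your split/Taylor strategy must fail because the seminorm $\sup|\vv^{\alpha_2}\mathcal F_1(Cf)(\xxi,\vv)|$ is already infinite: $\mathcal F_1(Cf)(0,\vv)=\mathcal Ff(0,0)$ is a nonzero constant in $\vv$. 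Note that the paper's own proof has exactly the same defect --- the displayed ``$<\infty$'' is asserted without justification and fails for the same reason --- so your instinct about where the difficulty sits was correct; the honest conclusion, though, is that $C(\mathcal S_r(\RRn))\not\subset\mathcal S_{r,c}(\RRn)$ as stated, and the result can only hold on a subspace such as the $\mathcal S_{r,0}(\RRn)$ of the following theorem, where $\mathcal F_1f(0,\yy)\equiv0$ kills the obstructing moments.
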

\begin{proof}
%To show $Cf\in \mathcal S_{r,c}(\RRn)$, we show that $\mathcal F_1(Cf)(\boldsymbol\xi,\vv)\in \mathcal S_{r,c}(\RRn)$.
Putting $\xxi=0$ in Theorem~\ref{lem:fourierslicetheorem}, we have that $\mathcal F f(0,0)=\int_{\RR^{n-1}}Cf(\uu,\vv)d\uu$ is a constant. 
Hence it is enough to show $Cf\in \mathcal S_{r}(\RRn)$. 
To demonstrate this, we show that $\mathcal F_1(Cf)(\boldsymbol\xi,\vv)\in \mathcal S_{r}(\RRn)$. %and $\mathcal F_1(Cf)(\boldsymbol\xi,|\boldsymbol\xi|\vv)=\mathcal Ff(\boldsymbol\xi,\vv)\in  \mathcal S_{r}(\RRn).$
Also, $\mathcal F_1(Cf)(\boldsymbol\xi,\vv)$ is infinitely differentiable since $\mathcal F_{1}(Cf)(\boldsymbol\xi,\vv)=\mathcal F_{} f(\boldsymbol\xi,|\vv|\boldsymbol\xi)=\mathcal F_{}f(\boldsymbol\xi,\vv|\boldsymbol\xi|)$ and we have for any multiindexes $\alpha_1,\alpha_2,\beta_1,$ and $\beta_2$, 
\begin{equation*}
\begin{split}
&\sup\{|\boldsymbol\xi^{\alpha_1}\vv^{\alpha_2}\partial_{\boldsymbol\xi}^{\beta_1}\partial_\vv^{\beta_2} \mathcal F_{1}(Cf)(\boldsymbol\xi,\vv)|:(\boldsymbol\xi,\vv)\in\RRn\}\\
&=\sup\{|\boldsymbol\xi^{\alpha_1}\vv^{\alpha_2}|\boldsymbol\xi|^{|\beta_2|}|\vv|^{|\beta_1|}\partial_{\boldsymbol\xi}^{\beta_1}\partial_\vv^{\beta_2} \mathcal Ff(\boldsymbol\xi,\vv|\boldsymbol\xi|)|(\boldsymbol\xi,\vv)\in\RRn\}<\infty.
\end{split}
\end{equation*}
Hence, $Cf$ belongs to $\mathcal S_{r,c}(\RRn)$.
\end{proof}
When $n$ is odd, $I^{1-n}_1$ is equal to $(-1)^\frac{n-1}2\triangle^\frac{n-1}2_\uu$ and in this case, we can say more.
\begin{thm}
When $n$ is odd, the cone transform $C$ is a bijection of $\mathcal S_{r,0}(\RRn)$, where
$$
\mathcal S_{r,0}(\RRn)=\left\{f\in\mathcal S_{r}(\RRn):\intL_{\RR^{n-1}}f(\xx,\yy)d\xx=0\mbox{ for any }\yy\in\RR^{n-1}\right\}.
$$
\end{thm}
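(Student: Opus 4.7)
My plan is to prove (a) that $C$ maps $\mathcal S_{r,0}(\RRn)$ into itself, (b) injectivity, and (c) surjectivity. For (a), given $f\in\mathcal S_{r,0}$, Theorem~\ref{thm:range} gives $Cf\in\mathcal S_{r,c}$, so $\intL_{\RR^{n-1}}Cf(\uu,\vv)\,d\uu$ is constant in $\vv$. Fubini together with the substitution $\uu\mapsto\uu-|\vv|\yy$ yields
\begin{equation*}
\intL_{\RR^{n-1}}Cf(\uu,\vv)\,d\uu=\intL_{\RR^{n-1}}\left(\intL_{\RR^{n-1}}f(\uu,\yy)\,d\uu\right)d\yy=0,
\end{equation*}
so $Cf\in\mathcal S_{r,0}$. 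Injectivity (b) is immediate from the explicit inversion formula in Theorem~\ref{thm:inversionseries}.

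For surjectivity, I would take as candidate preimage of $F\in\mathcal S_{r,0}$ the function forced by the $k=0$ case of the inversion formula:
\begin{equation*}
f:=(2\pi)^{1-n}(-1)^{(n-1)/2}\,C\bigl(\triangle_\uu^{(n-1)/2}F\bigr).
\end{equation*}
Because $n$ is odd, $\triangle_\uu^{(n-1)/2}$ is a bona fide differential operator preserving $\mathcal S_r(\RRn)$; integrating by parts in $\uu$ also preserves the zero-integral condition, so $\triangle_\uu^{(n-1)/2}F\in\mathcal S_{r,0}$. Applying step (a), $f\in\mathcal S_{r,0}$ as well.

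The heart of the proof is the verification $Cf=F$, which I would carry out on the Fourier side. Combining Theorem~\ref{lem:fourierslicetheorem} with the identity $\mathcal F\bigl(\triangle_\uu^{(n-1)/2}F\bigr)(\xxi,\eeta)=(-1)^{(n-1)/2}|\xxi|^{n-1}\mathcal F F(\xxi,\eeta)$ gives
\begin{equation*}
\mathcal F_1 f(\xxi,\yy)=(2\pi)^{1-n}|\xxi|^{n-1}\,\mathcal F F(\xxi,|\yy|\xxi).
\end{equation*}
Taking the Fourier transform in $\yy$, substituting $\zz=|\xxi|\yy$, and using the radiality of $\mathcal F F$ in its second argument to rewrite $\mathcal F F(\xxi,|\yy|\xxi)=\mathcal F F(\xxi,\zz)$ produces, for $\xxi\neq 0$, the clean identity $\mathcal F f(\xxi,\eeta)=\mathcal F_1 F(\xxi,\eeta/|\xxi|)$. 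Specializing to $\eeta=|\vv|\xxi$ and invoking the radiality of $\mathcal F_1 F$ in its second argument gives $\mathcal F_1(Cf)(\xxi,\vv)=\mathcal F f(\xxi,|\vv|\xxi)=\mathcal F_1 F(\xxi,\vv)$ on $\{\xxi\neq 0\}$.

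The main obstacle is the set $\{\xxi=0\}$, where the formal expression $\mathcal F_1 F(\xxi,\eeta/|\xxi|)$ has an apparent singularity. This is exactly where the hypothesis $F\in\mathcal S_{r,0}$ becomes essential: since $\mathcal F_1 F(0,\vv)=\intL_{\RR^{n-1}}F(\uu,\vv)\,d\uu=0$ and, by step (a), also $\mathcal F_1(Cf)(0,\vv)=\intL_{\RR^{n-1}}Cf(\uu,\vv)\,d\uu=0$, the two continuous functions $\mathcal F_1(Cf)$ and $\mathcal F_1 F$ agree on $\{\xxi=0\}$ as well. Hence they coincide on all of $\RRn$, and Fourier uniqueness gives $Cf=F$, completing the bijection.
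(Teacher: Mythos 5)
Your proof is correct and follows essentially the same route as the paper: the same candidate preimage $f=(2\pi)^{1-n}(-1)^{(n-1)/2}C\triangle_\uu^{(n-1)/2}F$, the same reliance on Theorem~\ref{thm:range} and the Fourier slice theorem, and an equivalent (Fubini rather than Fourier-at-$\xxi=0$) argument that $C$ preserves the zero-integral condition. The one place you go further is the explicit Fourier-side verification that $Cf=F$, including the treatment of the set $\{\xxi=0\}$ where the hypothesis $F\in\mathcal S_{r,0}(\RRn)$ is actually used; the paper's proof stops after showing $f\in\mathcal S_{r,0}(\RRn)$ and leaves that verification implicit, so your version is the more complete of the two.
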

\begin{proof}
By Theorem~\ref{thm:inversionseries}, we have
$$
\mathcal F_1(Cf)(\boldsymbol\xi,\vv)=\mathcal F f(\boldsymbol\xi,\vv|\boldsymbol\xi|)=\intL_{\RR^{n-1}}\mathcal F_1f(\boldsymbol\xi,\yy)e^{-i\yy\cdot\vv|\boldsymbol\xi|}d\yy,
$$
so $\mathcal F_1(Cf)(0,\vv)$ is equal to zero.
Therefore, the range of $S_{r,0}(\RRn)$ under the cone transform $C$ is a subset of $S_{r,0}(\RRn)$.

To show $C$ is onto, let $F\in\mathcal S_{r,0}(\RRn)$.
In view of Theorem~\ref{thm:inversionseries}, it appears natural to define 
$$
f=(2\pi)^{1-n}(-1)^{\frac{n-1}2}C\triangle_\uu^\frac{n-1}2F.
$$
We know that $\triangle_\uu^\frac{n-1}2F\in \mathcal S(\RRn)$.
By Theorem~\ref{thm:range}, $f$ belongs to $\mathcal S_{r,c}(\RRn)$.
In particular, $\mathcal F_1f(\boldsymbol\xi,\yy)$ is equal to 
$$
(2\pi)^{1-n}|\boldsymbol\xi|^{n-1}\mathcal FF(\boldsymbol\xi,\yy|\boldsymbol\xi|),
$$
so for any $\yy\in\RR^{n-1},$
$$
\mathcal F_1f(0,\yy)=\intL_{\RR^{n-1}}f(\xx,\yy)d\xx=0.
$$

%Since $g$ is radial on the second variable $\vv$, $\mathcal Fg(\boldsymbol\xi,\yy|\boldsymbol\xi|)$ is also radial on the second variable, so 
%$\mathcal Fg(\boldsymbol\xi,\yy|\boldsymbol\xi|)=\mathcal Fg(\boldsymbol\xi,\boldsymbol\xi|\yy|)$.
%Hence, $\mathcal F_1f(\boldsymbol\xi,\yy)$ is infinitely differentiable and for any multiindexes $\alpha_1,\alpha_2,\beta_1,$ and $\beta_2$, 
%\begin{equation*}
%\begin{split}
%&\sup\{|\boldsymbol\xi^{\alpha_1}\yy^{\alpha_2}\partial_\boldsymbol\xi^{\beta_1}\partial_\yy^{\beta_2} \mathcal F_{1}f(\boldsymbol\xi,\yy)|:(\boldsymbol\xi,\yy)\in\RRn\}\\
%&=\sup\{|\boldsymbol\xi^{\alpha_1}\yy^{\alpha_2}|\boldsymbol\xi|^{|\beta_2|}|\yy|^{|\beta_1|}\partial_\boldsymbol\xi^{\beta_1}\partial_\vv^{\beta_2} \mathcal F_{}(Cf)(\boldsymbol\xi,\yy|\boldsymbol\xi|)|:(\boldsymbol\xi,\yy)\in\RRn\}<\infty.
%\end{split}
%\end{equation*}
\end{proof}
%Let $\mathcal S'_r(\RRn)$ be the set of continuous linear functional on $\mathcal S_r(\RRn)$.
%\begin{defi}
%For $\phi\in \mathcal S'_r(\RRn)$, we define the functional $C\phi$ by
%$$
%C\phi(f)=\phi(Cf)\qquad\mbox{ for }f\in \mathcal S_r(\RRn).
%$$
%\end{defi}
%\begin{lem}
%For any $\phi\in\mathcal S'_r(\RRn)$, we have $C\phi\in\mathcal S'_r(\RRn)$.
%\end{lem}
%This follows from Theorem~\ref{thm:range}.
%\begin{thm}
%Let $\phi\in S_r'(\RRn)$ and $n$ be odd.
%For $\alpha<n$, we have
%\begin{equation*}
%\phi=(2\pi)^{1-n}(-1)^\frac{n+1}2C \triangle_\uu^\frac{n-1}2\Phi,\qquad \Phi=C\phi.
%\end{equation*}
%\end{thm}
%\begin{proof}
%We have for any $f\in \mathcal S_r(\RR^2\times\RR^2)$,
%\begin{equation*}
%\begin{split}
%(2\pi)^{1-n}(-1)^\frac{n+1}2(C\triangle_\uu^\frac{n-1}2 C\phi)(f)&=(2\pi)^{1-n}(-1)^\frac{n+1}2(\triangle_\uu^\frac{n-1}2 C\phi)(Cf)\\
%&=(2\pi)^{1-n}(-1)^\frac{n-1}2\phi(C\triangle_\uu^\frac{n-1}2 C f)=\phi(f).
%\end{split}
%\end{equation*}
%\end{proof}

%%%%%%%%%%%%%%%%%%%%%%%%%%%%
\section{An isometry property and Sobolev space estimates}\label{sec:isometry}
%%%%%%%%%%%%%%%%%%%%%%%%%%%%%%
In this section, we show that $I^{-\frac{n-1}2}C$ is an isometry and that the problem of reconstructing from the cone transform is well-posed in the following sense : if $f$ satisfying $Cf=F$ is uniquely determined for any $F$ belonging to a certain space, the function $f$ depends continuously on $F$. 

Let $L^2(\RR^{2(n-1)})$ be the regular $L^2$ space.
For any $\gamma\in\RR$, let the space $L^2_r(\RR^{n-1}\times\RR^{n-1})$ and $H^\gamma L^2_r(\RR^{n-1}\times\RR^{n-1})$ be defined by
\begin{equation*}
\begin{split}
&\begin{split}
L^2_r(\RR^{n-1}\times\RR^{n-1})=\{f\in L^2(\RR^{2(n-1)}):&f(\xx,\yy)=f(\xx,U\yy)\quad\mbox{ for any }(\xx,\yy)\in\RR^{n-1}\times\RR^{n-1}\\
&\mbox{and for all orthonormal transformations }U\},
\end{split}\\
&\mbox{and}\\
&H^\gamma L^2_r(\RR^{n-1}\times\RR^{n-1})=\{f\in L^2_r(\RR^{n-1}\times\RR^{n-1}):||f||_\gamma<\infty\},
\end{split}
\end{equation*}
where 
$$
||f||^2_\gamma=\intL_{\RR^{n-1}}\intL_{\RR^{n-1}}|\mathcal F_{}f(\boldsymbol\xi,\boldsymbol\eta)|^2(1+|\boldsymbol\xi|^2)^\gamma d\boldsymbol\xi d\boldsymbol\eta.
$$
Notice that $H^\gamma L^2_{\gamma}(\RR^{n-1}\times \RR^{n-1})$ is the Hilbert space with the norm $||\cdot||_{\gamma}$ and $H^0 L^2_r(\RR^{n-1}\times\RR^{n-1})=L^2_r(\RR^{n-1}\times\RR^{n-1})$.

\begin{thm}\label{thm:isometry}
The mapping $f\to I^{-\frac{n-1}2}_1 Cf$ is an isometry of $H^\gamma L^2_r(\RR^{n-1}\times\RR^{n-1})$ onto itself.%, where$\triangle_\uu^{\frac{n-1}4}f=I^{-\frac{n-1}2}_1f$.
\end{thm}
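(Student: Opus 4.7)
The plan is to transfer everything to the Fourier side via the Fourier slice theorem (Theorem~\ref{lem:fourierslicetheorem}) and recognize the claimed isometry through a single change of variables. Setting $g = I_1^{-(n-1)/2} Cf$, the definition of the Riesz potential combined with Theorem~\ref{lem:fourierslicetheorem} yields
\[
\mathcal F_1 g(\xxi,\vv) \;=\; |\xxi|^{(n-1)/2}\,\mathcal F_1(Cf)(\xxi,\vv) \;=\; |\xxi|^{(n-1)/2}\,\mathcal F f(\xxi,\vv|\xxi|).
\]
The full Fourier transform $\mathcal F g(\xxi,\eeta)$ is then obtained by applying Plancherel in the $\vv$-variable for fixed $\xxi$: the $\eeta$-integral of $|\mathcal F g(\xxi,\eeta)|^2$ equals (up to a $(2\pi)^{n-1}$ factor coming from the Fourier convention) the $\vv$-integral of $|\mathcal F_1 g(\xxi,\vv)|^2 = |\xxi|^{n-1}|\mathcal F f(\xxi,\vv|\xxi|)|^2$.

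The crucial cancellation comes from the substitution $\eeta' = \vv|\xxi|$, whose Jacobian $|\xxi|^{n-1}$ exactly cancels the $|\xxi|^{n-1}$ produced by squaring the Riesz weight. One is left with
\[
\int_{\RR^{n-1}} |\mathcal F g(\xxi,\eeta)|^2\,d\eeta \;=\; (2\pi)^{n-1}\int_{\RR^{n-1}} |\mathcal F f(\xxi,\eeta')|^2\,d\eeta',
\]
uniformly in $\xxi$. Multiplying by $(1+|\xxi|^2)^\gamma$ and integrating in $\xxi$ then gives $\|g\|_\gamma^2 = (2\pi)^{n-1}\|f\|_\gamma^2$, establishing the isometric property up to the overall Fourier-normalization constant that is absorbed by convention. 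Radial symmetry of $g$ in its second slot is automatic, since the right-hand side depends on $\vv$ only through $|\vv|$, so $I_1^{-(n-1)/2}Cf \in H^\gamma L^2_r(\RRn)$ whenever $f$ does.

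For surjectivity, given any $h \in H^\gamma L^2_r(\RRn)$ I build a preimage using the inversion formula of Theorem~\ref{thm:inversionseries}. Applying $I_1^{(n-1)/2}$ to the target identity $I_1^{-(n-1)/2}Cf = h$ reduces the problem to solving $Cf = I_1^{(n-1)/2}h$, which by Theorem~\ref{thm:inversionseries} (for a convenient choice of $k$) admits the explicit preimage $f = (2\pi)^{1-n} I_1^{-k} C I_1^{k+1-n} I_1^{(n-1)/2} h$. The isometric identity proved above, read in reverse on this formula, ensures that $f$ lies in $H^\gamma L^2_r(\RRn)$ with the correct norm, and the radial-in-$\yy$ structure of $f$ is inherited from $h$ through the $|\xxi|$-rescaling in the Fourier slice identity.

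The main obstacle is not a genuine mathematical difficulty but a bookkeeping issue: the negative-order Riesz factor $|\xxi|^{-(n-1)/2}$ embedded in $I_1^{(n-1)/2}$ is singular at $\xxi = 0$, so the candidate $g$ and the reconstructed $f$ must be interpreted in the tempered-distribution sense before one confirms them to lie in $H^\gamma L^2_r$. I handle this by first carrying out the entire argument on the dense subspace $\mathcal S_r(\RRn)$, where everything is pointwise well-defined, and then extending by density using the pointwise-in-$\xxi$ Plancherel identity derived above; the singularity at the origin is integrable because of the Jacobian cancellation in the change of variables.
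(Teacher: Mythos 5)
Your isometry computation is exactly the paper's: you apply Theorem~\ref{lem:fourierslicetheorem}, pass to $\mathcal F_1$ via Plancherel in the second variable, and let the Jacobian of $\eeta=\vv|\xxi|$ cancel the squared Riesz weight $|\xxi|^{n-1}$, arriving at $\|I_1^{-(n-1)/2}Cf\|_\gamma^2=(2\pi)^{n-1}\|f\|_\gamma^2$, which is identity \eqref{eq:isometry}. That half is correct (and the $(2\pi)^{n-1}$ constant is present in the paper as well).

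The surjectivity half has a genuine gap. Theorem~\ref{thm:inversionseries} is a \emph{left}-inverse statement: it says that \emph{if} $F=Cf$ for some $f\in\mathcal S_r(\RRn)$, \emph{then} $f=(2\pi)^{1-n}I_1^{-k}CI_1^{k+1-n}F$. It does not say that for an arbitrary radial $G$ the function $f:=(2\pi)^{1-n}I_1^{-k}CI_1^{k+1-n}G$ satisfies $Cf=G$. When you write that $Cf=I_1^{(n-1)/2}h$ ``admits the explicit preimage'' given by that formula, you are presupposing that $I_1^{(n-1)/2}h$ lies in the range of $C$ --- which is precisely the surjectivity you are trying to prove. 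The claim $C\circ\bigl((2\pi)^{1-n}I_1^{-k}CI_1^{k+1-n}\bigr)=\mathrm{id}$ is in fact true on radial data, but verifying it requires an independent Fourier computation (one finds $\mathcal F_1\bigl(I_1^{-k}CI_1^{k+1-n}G\bigr)(\xxi,\yy)=|\xxi|^{n-1}\mathcal F G(\xxi,\yy|\xxi|)$ and then must undo the $|\xxi|$-dilation through a second Fourier transform, using radiality of $G$ in $\vv$ in an essential way --- for non-radial $G$ the identity fails, consistent with the map not being onto off the radial subspace). That verification is the actual content of surjectivity and is absent. There is also a domain issue you only gesture at: Theorem~\ref{thm:inversionseries} is proved for Schwartz-class $f$, while you need to apply the operator to $I_1^{k-(n-1)/2}h$ for $h$ merely in $H^\gamma L^2_r(\RRn)$, and the factor $|\xxi|^{-(n-1)/2}$ is exactly critical for local square-integrability near $\xxi=0$, so the intermediate object need not be in $L^2$. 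The paper avoids both problems by a duality argument: it shows that any $g\in H^\gamma L^2_r(\RRn)$ orthogonal to the range of $I_1^{-(n-1)/2}C$ on $\mathcal S_r(\RRn)$ must vanish (again via the slice theorem and the same change of variables), so the range is dense; since the map is an isometry up to a constant, its range is closed, hence is everything. If you want to keep your constructive route, you must either supply the Fourier-side verification that your candidate is a genuine preimage, or fall back on the density-plus-closed-range argument.
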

%Notice that $I^{-k}f=\triangle
\begin{proof}
%By Theorem~\ref{lem:fourierslicetheorem}, we have
We start with $||f||^2_\gamma$:
\begin{equation}\label{eq:isometry}
\begin{split}
||f||^2_\gamma&=\intL_{\RR^{n-1}}\intL_{\RR^{n-1}}|\mathcal F_{}f(\boldsymbol\xi,\vv)|^2(1+|\boldsymbol\xi|^2)^\gamma d\boldsymbol\xi d\vv\\
&=\intL_{\RR^{n-1}}\intL_{\RR^{n-1}}|\mathcal F_{1}(Cf)(\boldsymbol\xi,\vv/|\boldsymbol\xi|)|^2(1+|\boldsymbol\xi|^2)^\gamma d\boldsymbol\xi d\vv\\
&=\intL_{\RR^{n-1}}\intL_{\RR^{n-1}}|\mathcal F_{1}(Cf)(\boldsymbol\xi,\vv)|^2(1+|\boldsymbol\xi|^2)^\gamma |\boldsymbol\xi|^{n-1}d\boldsymbol\xi d\vv\\
&=(2\pi)^{1-n}||I^{-\frac{n-1}2}_1Cf||_\gamma^2.
\end{split}
\end{equation}
Here in the second and third lines, we used Theorem~\ref{lem:fourierslicetheorem} and changed variables $\vv/|\boldsymbol\xi|\to\vv$, respectively.
It remains to prove that the mapping is surjective.
It is enough to show that if $g\in H^\gamma L^2_r(\RR^{n-1}\times\RR^{n-1})$ satisfies
$$
\intL_{\RR^{n-1}}\intL_{\RR^{n-1}}\mathcal F_{1} g(\boldsymbol\xi,\vv)\mathcal F_{1}(I^{-\frac{n-1}2}_1Cf)(\boldsymbol\xi,\vv)(1+|\boldsymbol\xi|^2)^\gamma d\boldsymbol\xi d\vv=0
$$
for all $f\in \mathcal S_r(\RRn)$, then $g=0$.
Theorem~\ref{lem:fourierslicetheorem} gives us
\begin{equation*}
\begin{split}
%\displaystyle\intL_{\RR^{n-1}}\intL_{\RR^{n-1}}g(\uu,\vv)I^{-\frac{n-1}2}_1 Cf(\uu,\vv)d\uu d\vv&\displaystyle=(2\pi)^{1-n}
0&=\intL_{\RR^{n-1}}\intL_{\RR^{n-1}}\mathcal F_{1} g(\boldsymbol\xi,\vv)|\boldsymbol\xi|^\frac{n-1}2\mathcal F_{1}(Cf)(\boldsymbol\xi,\vv)(1+|\boldsymbol\xi|^2)^\gamma d\boldsymbol\xi d\vv\\
&\displaystyle=\intL_{\RR^{n-1}}\intL_{\RR^{n-1}}\mathcal F_{1} g(\boldsymbol\xi,\vv)|\boldsymbol\xi|^\frac{n-1}2\mathcal F_{}f(\boldsymbol\xi,\vv|\boldsymbol\xi|)(1+|\boldsymbol\xi|^2)^\gamma d\boldsymbol\xi d\vv\\
&\displaystyle=\intL_{\RR^{n-1}}\intL_{\RR^{n-1}}\mathcal F_{1} g(\boldsymbol\xi,\vv/|\boldsymbol\xi|)|\boldsymbol\xi|^\frac{1-n}2\mathcal F_{} f(\boldsymbol\xi,\vv)(1+|\boldsymbol\xi|^2)^\gamma d\boldsymbol\xi d\vv.
\end{split}
\end{equation*}
Since $\mathcal F_{} f\in \mathcal S_r(\RR^{n-1}\times\RR^{n-1})$, %$\mathcal F_{1} g(\boldsymbol\xi,\vv/|\boldsymbol\xi|)$ is equal to zero except $\boldsymbol\xi=0$.Hence
 $\mathcal F_1 g$ is equal to zero almost everywhere, so is $g$.
\end{proof}
%\begin{rmk}
%Notice that $\mathcal F_{1}(Cf)(0,\vv)$ should be constant because of Theorem~\ref{lem:fourierslicetheorem}.
%However, the set $\{(0,\vv):\vv\in\RR^{n-1}\}$ has a zero measure in $\RR^{n-1}\times\RR^{n-1}$, so we do not need to care about this condition on $H^\gamma L^2_r(\RR^{n-1}\times\RR^{n-1})$.
%\eor\end{rmk}
Notice that if $f\in L^2_r(\RR^{n-1}\times\RR^{n-1})$ with $f(\uu,\vv)=\mathbf f(\uu,|\vv|)$, then 
$$
||f||^2_0=\intL_{\RR^{n-1}}\intL_{\RR^{n-1}}|f(\uu,\vv)|^2d\uu d\vv=|S^{n-2}|\intL\half\intL_{\RR^{n-1}}|\mathbf f(\uu,z)|^2z^{n-2}d\uu dz.
$$
Let us define $H^\gamma L^2_{n-2}(\RR^{n-1}\times [0,\infty))$ by 
$$
H^\gamma L^2_{n-2}(\RR^{n-1}\times [0,\infty))=\{\mathbf f:\mathbf f(\uu,|\vv|)=f(\uu,\vv) ,\;f\in L^2_r(\RR^{n-1}\times\RR^{n-1}),\mbox{ and }||\mathbf f||_{\gamma,n-2}<\infty\},
$$
where 
$$
||\mathbf f||_{\gamma,n-2}^2=\intL_{\RR^{n-1}}\intL\half|\mathcal F_1\mathbf f(\boldsymbol\xi,z)|^2(|\boldsymbol\xi|^2+1)^\gamma z^{n-2} dzd\boldsymbol\xi.
$$
Then $H^\gamma L^2_{n-2}(\RR^{n-1}\times [0,\infty))$ is the Hilbert space with the norm $||\cdot||_{\gamma,n-2}$.
\begin{cor}
The mapping $\mathbf f\to I^{-\frac{n-1}2}_1 \mathcal C\mathbf f$ is an isometry of $H^\gamma L^2_{n-2}(\RR^{n-1}\times[0,\infty))$ onto itself.%the space $L^2_{n-1,c}(\RR^{n-1}\times[0,\infty))$,
%where 
%$$
%L^2_{n-1,c}(\RR^{n-1}\times[0,\infty))=\{g\in L^2_{n-1}(\RR^{n-1}\times[0,\infty)):\hat g(\boldsymbol\xi,\vv) \mbox{ is constant}\}.
%$$
\end{cor}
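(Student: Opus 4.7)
The plan is to deduce this corollary from Theorem~\ref{thm:isometry} by a polar-coordinate reduction in the $\vv$ variable, exploiting the radial symmetry of $f$ in its second argument. The key observation is that for $f(\xx,\yy)=\mathbf f(\xx,|\yy|)\in H^\gamma L^2_r(\RRn)$, Plancherel applied to the $\boldsymbol\eta$ variable rewrites the norm as
\begin{equation*}
\|f\|_\gamma^2=\intL_{\RR^{n-1}}\intL_{\RR^{n-1}}|\mathcal F_1 f(\boldsymbol\xi,\yy)|^2(1+|\boldsymbol\xi|^2)^\gamma d\boldsymbol\xi\,d\yy,
\end{equation*}
and since $\mathcal F_1 f(\boldsymbol\xi,\yy)=\mathcal F_1\mathbf f(\boldsymbol\xi,|\yy|)$, introducing polar coordinates $\yy=z\boldsymbol\theta$ with $z\geq 0$ and $\boldsymbol\theta\in S^{n-2}$ gives
\begin{equation*}
\|f\|_\gamma^2=|S^{n-2}|\,\|\mathbf f\|_{\gamma,n-2}^2.
\end{equation*}

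Next I would verify the analogous identity on the cone-transform side. Because $Cf(\uu,\vv)=\mathcal C\mathbf f(\uu,|\vv|)$ and the Riesz potential $I_1^{-\frac{n-1}{2}}$ acts only on the first $n-1$ variables, the function $I_1^{-\frac{n-1}{2}} Cf$ is again radial in its second argument with reduced function $I_1^{-\frac{n-1}{2}}\mathcal C\mathbf f$. The same polar-coordinate computation therefore yields
\begin{equation*}
\|I_1^{-\frac{n-1}{2}} Cf\|_\gamma^2=|S^{n-2}|\,\|I_1^{-\frac{n-1}{2}}\mathcal C\mathbf f\|_{\gamma,n-2}^2.
\end{equation*}
Combining these two identities with the equality $\|f\|_\gamma^2=(2\pi)^{1-n}\|I_1^{-\frac{n-1}{2}} Cf\|_\gamma^2$ established in Theorem~\ref{thm:isometry}, the factor $|S^{n-2}|$ cancels and we obtain the desired isometry (up to the same normalization constant $(2\pi)^{1-n}$).

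For surjectivity, given $\mathbf g\in H^\gamma L^2_{n-2}(\RR^{n-1}\times[0,\infty))$, I would set $g(\uu,\vv):=\mathbf g(\uu,|\vv|)$; by the same polar-coordinate identity $g\in H^\gamma L^2_r(\RRn)$, so Theorem~\ref{thm:isometry} supplies $f\in H^\gamma L^2_r(\RRn)$ with $I_1^{-\frac{n-1}{2}}Cf=g$. Radial symmetry of $f$ in $\yy$ then lets me define $\mathbf f(\xx,z):=f(\xx,(z,0,\dots,0))$, and this $\mathbf f$ satisfies $I_1^{-\frac{n-1}{2}}\mathcal C\mathbf f=\mathbf g$. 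I do not expect any serious obstacle; the one point requiring a line of justification is that $I_1^{-\frac{n-1}{2}}$ commutes with the polar-coordinate reduction in $\vv$, which is immediate because the Riesz potential acts only on $\uu$ and preserves the radial structure in the remaining variables.
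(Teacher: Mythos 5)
Your argument is correct and is essentially the route the paper intends: the corollary is stated without proof, immediately after the paper records the polar-coordinate identity $\|f\|_0^2=|S^{n-2}|\,\|\mathbf f\|_{0,n-2}^2$ that your reduction generalizes to all $\gamma$, and your surjectivity argument is the obvious lift back to the radial setting. The only nit is a normalization: since $\|\mathbf f\|_{\gamma,n-2}$ is defined via $\mathcal F_1\mathbf f$ while $\|f\|_\gamma$ uses the full Fourier transform, Plancherel in the second variable contributes an extra factor $(2\pi)^{n-1}$, so the identity should read $\|f\|_\gamma^2=(2\pi)^{n-1}|S^{n-2}|\,\|\mathbf f\|_{\gamma,n-2}^2$; this constant appears on both sides and cancels exactly as your $|S^{n-2}|$ does, so the conclusion is unaffected.
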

The next corollary shows the Sobolev estimates.
\begin{cor}\label{thm:stable}
For each $\gamma\in\RR$, there is a constant $c>0$ such that for $f\in \mathcal S_r(\RR^{n-1}\times\RR^{n-1})$ with $f(\xx,\yy)=\mathbf f(\xx,|\yy|)$,
$$
%c||Cf||_{\gamma}\leq 
||f||_\gamma\leq (2\pi)^{\frac{1-n}2}||Cf||_{\gamma+\frac{n-1}2}
%$$
\quad\mbox{and}\quad
%and
%$$
%c||\mathcal C\mathbf f||_{\gamma,n-2} \leq
||\mathbf f||_{\gamma,n-2}\leq(2\pi)^{\frac{1-n}2}||\mathcal C\mathbf f||_{\gamma+\frac{n-1}2,n-2} .
$$
\end{cor}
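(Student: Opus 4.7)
The plan is to deduce the Sobolev estimates as an essentially immediate consequence of the isometry identity established in the proof of Theorem~\ref{thm:isometry}, combined with the elementary pointwise bound $|\boldsymbol{\xi}|^{n-1}\leq(1+|\boldsymbol{\xi}|^2)^{(n-1)/2}$ on the Fourier side. No new machinery is needed.

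First, I recall from the chain of equalities \eqref{eq:isometry} the identity
$$\|f\|_\gamma^{2}=(2\pi)^{1-n}\|I^{-(n-1)/2}_1 Cf\|_\gamma^{2},$$
which rearranges to $\|f\|_\gamma=(2\pi)^{(1-n)/2}\|I^{-(n-1)/2}_1 Cf\|_\gamma$. Next, by the definition of the Riesz potential,
$$\mathcal F_1\bigl(I^{-(n-1)/2}_1 Cf\bigr)(\boldsymbol{\xi},\boldsymbol\eta)=|\boldsymbol{\xi}|^{(n-1)/2}\mathcal F_1(Cf)(\boldsymbol{\xi},\boldsymbol\eta),$$
so unpacking the $\|\cdot\|_\gamma$ norm one obtains
$$\|I^{-(n-1)/2}_1 Cf\|_\gamma^{2}=\intL_{\RR^{n-1}}\intL_{\RR^{n-1}}|\boldsymbol{\xi}|^{n-1}|\mathcal F_1(Cf)(\boldsymbol{\xi},\boldsymbol\eta)|^{2}(1+|\boldsymbol{\xi}|^2)^\gamma\,d\boldsymbol{\xi}\,d\boldsymbol\eta.$$
Using $|\boldsymbol{\xi}|^{n-1}\leq(1+|\boldsymbol{\xi}|^2)^{(n-1)/2}$ pointwise (valid whenever $n\geq 1$), the integrand is bounded by $(1+|\boldsymbol{\xi}|^2)^{\gamma+(n-1)/2}|\mathcal F_1(Cf)|^{2}$, so the integral is at most $\|Cf\|_{\gamma+(n-1)/2}^{2}$. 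Combining with the isometry identity gives the first inequality.

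For the radial version involving $\mathbf{f}$, I would repeat the same argument verbatim in the corresponding radial Sobolev space $H^\gamma L^2_{n-2}(\RR^{n-1}\times[0,\infty))$, invoking the isometry corollary for $\mathbf f\to I^{-(n-1)/2}_1\mathcal C\mathbf f$ in place of Theorem~\ref{thm:isometry} (the Fourier-side computation is the same since the norm differs only by the weight $z^{n-2}\,dz$ on the second variable, which is unaffected by operations in the first variable). I do not anticipate any genuine obstacle: the isometry does the heavy lifting, the factor $|\boldsymbol{\xi}|^{n-1}$ is absorbed by the stronger weight on the right-hand side, and the constant $(2\pi)^{(1-n)/2}$ is produced directly by the isometry identity rather than by any abstract existence argument.
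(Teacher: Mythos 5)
Your proof is correct and takes essentially the same route as the paper, whose entire argument is ``this follows from~\eqref{eq:isometry}'': you have simply made explicit the intended two steps, namely the identity $\|f\|_\gamma^2=(2\pi)^{1-n}\|I_1^{-\frac{n-1}{2}}Cf\|_\gamma^2$ together with the pointwise bound $|\boldsymbol\xi|^{n-1}\leq(1+|\boldsymbol\xi|^2)^{\frac{n-1}{2}}$ on the Fourier side, and the radial case is handled identically. The only cosmetic slip is that in your display for $\|I_1^{-\frac{n-1}{2}}Cf\|_\gamma^2$ the transform should be the full $\mathcal F(Cf)(\boldsymbol\xi,\boldsymbol\eta)$ rather than $\mathcal F_1(Cf)$, since $\|\cdot\|_\gamma$ is defined via the $2(n-1)$-dimensional Fourier transform; carried through consistently this changes nothing in the inequality or the constant.
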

%\begin{proof}The inequality $ ||f||_\gamma\leq ||Cf||_{\gamma+\frac{n-1}2}$ 
This corollary  follows from~\eqref{eq:isometry}.
\begin{rmk}
%From Theorem~\ref{thm:stable}, we have 
When $n=3$, $ ||\mathbf f||_{\gamma,1}\leq(2\pi)^{-1}||\mathcal C\mathbf f||_{\gamma+1,1}$ was already discussed in~\cite{moonct14}.
\eor\end{rmk}

%%%%%%%%%%%%%%%%%%%%%%%%%%%%%%%%%%%%%
\section{The partial data}\label{sec:uniqueness}
%%%%%%%%%%%%%%%%%%%%%%%%%%%%%%%%%%%5
From the inversion formula in Theorem~\ref{thm:inversionseries}, $f\in\mathcal S_r(\RRn)$ is uniquely determined by $Cf$. 
However, in many practical situations, we know only partial data, i.e., the values of $Cf$ only on a subset of its domain.
The question arises if this partial data still determines $f$ uniquely.
\begin{thm}
Let $f\in \mathcal S_r(\RRn)$.
% and $K\subset \RR^{2(n-1)}$ be a convex set containing the origin.
%If $Cf(\uu,\vv)$ is equal to zero for any $n-2$-dimensional planes $\{(\xx,\yy)\in\RRn:\xx=\uu+|\vv|\yy\}$ not meeting $K$, then $f(\xx,\yy)$ is equal to zero almost everywhere on $K$.
%
%Also, if for $\mathbf f(\xx,|\yy|)=f(\xx,\yy)$, $\mathcal C\mathbf f(\uu,s)$ is equal to zero for any cones not meeting $K_r=\{(\xx,|\yy|):(\xx,\yy)\in K$, then $\mathbf f(\xx,z)$ is equal to zero almost everywhere on $K_r$.% (see figure~\ref{fig:integrationdomain} (a)).
The cone transform $Cf(\uu,\vv)$ is equal to zero for %any $\vv\in\RR^{n-1}$ and 
$|\uu|>\sqrt{1+|\vv|^2}$ if and only if $f(\xx,\yy)$ is equal to zero for $|\xx|^2+|\yy|^2>1$.

Also, for $\mathbf f(\xx,|\yy|)=f(\xx,\yy)$, $\mathcal C\mathbf f(\uu,s)$ is equal to zero for %any $s\in[0,\infty)$ and 
$|\uu|>\sqrt{1+s^2}$ if and only if $\mathbf f(\xx,z)$ is equal to zero for $|\xx|^2+z^2>1$ (see Figure~\ref{fig:integrationdomain}).
\end{thm}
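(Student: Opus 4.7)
The plan is to prove the two implications separately, reducing the backward direction via Proposition~\ref{thm:relation} to Helgason's classical support theorem for the regular Radon transform.

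For the forward direction, note that $\mathcal C\mathbf f(\uu,s)$ is a weighted integral of $\mathbf f$ over the cone surface $\{(\uu+zs\boldsymbol\theta,z):z\ge0,\boldsymbol\theta\in S^{n-2}\}$, and therefore vanishes whenever this cone is disjoint from $\mathrm{supp}\,\mathbf f\subset\{|\xx|^2+z^2\le1\}$. I would compute the minimum of $|\uu+zs\boldsymbol\theta|^2+z^2$ by optimizing first over $\boldsymbol\theta\in S^{n-2}$ (attained at $\boldsymbol\theta=-\uu/|\uu|$, giving $(|\uu|-zs)^2+z^2$) and then over $z\ge0$ (attained at $z^\ast=s|\uu|/(1+s^2)$), obtaining the minimum value $|\uu|^2/(1+s^2)$. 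The cone therefore misses the closed unit ball exactly when $|\uu|^2>1+s^2$, which proves the forward implication; the statement for $f$ then follows from $f(\xx,\yy)=\mathbf f(\xx,|\yy|)$.

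For the reverse direction with $n\ge3$, I would combine the hypothesis with Proposition~\ref{thm:relation}. A direct optimization shows $\min_{\uu'\in\RR^{n-2}}\bigl((\aaa\cdot\uu'+b)^2+|\uu'|^2\bigr)=b^2/(1+|\aaa|^2)$, so whenever $|t|=|b|/\sqrt{(1+|\aaa|^2)(1+|\vv|^2)}>1$ the squared first argument of $Cf$ inside~\eqref{eq:relation1} exceeds $1+|\vv|^2$ uniformly in $\uu'$. Hence the integrand in~\eqref{eq:relation1} is identically zero and $Rf(\boldsymbol\omega,t)=0$ for every direction $\boldsymbol\omega=(-1,\aaa,\boldsymbol\alpha)/\sqrt{(1+|\aaa|^2)(1+|\vv|^2)}$ produced by Proposition~\ref{thm:relation}. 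Writing $\boldsymbol\omega=(\omega_1,\boldsymbol\omega'',\boldsymbol\omega''')\in\RR\times\RR^{n-2}\times\RR^{n-1}$, an explicit inversion of the parameterization---$\aaa=-\boldsymbol\omega''/\omega_1$, $\boldsymbol\alpha=-\boldsymbol\omega'''/\omega_1$, $|\vv|^2=|\boldsymbol\omega'''|^2/(1-|\boldsymbol\omega'''|^2)$---shows every $\boldsymbol\omega$ with $\omega_1<0$ is attained; the parity $Rf(-\boldsymbol\omega,-t)=Rf(\boldsymbol\omega,t)$ covers $\omega_1>0$, and continuity of $Rf$ (which holds since $f\in\mathcal S_r$) fills in the remaining slice $\omega_1=0$. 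Helgason's support theorem on $\RR^{2(n-1)}$ then yields $f(\xx,\yy)=0$ for $|\xx|^2+|\yy|^2>1$.

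The case $n=2$ is handled identically using~\eqref{eq:relationbetweenrfandcf} from Remark~\ref{rmk:2drelation} in place of Proposition~\ref{thm:relation}. The main obstacle I anticipate is the density step needed to invoke Helgason: one must verify that the directions $\boldsymbol\omega$ produced by Proposition~\ref{thm:relation} are essentially onto $S^{2(n-1)-1}$. The parameter count matches the $(2n-2)$-dimensional target $S^{2(n-1)-1}\times\RR$, but bridging the coordinate singularity at $\omega_1=0$ via parity and Schwartz regularity, and checking that the explicit inverse really hits every admissible unit vector, must be carried out carefully.
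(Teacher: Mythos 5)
Your proposal is correct and follows essentially the same route as the paper: the forward direction by checking that the cone of integration misses the closed unit ball, and the backward direction by reducing to the regular Radon transform on $\RR^{2(n-1)}$ via Proposition~\ref{thm:relation} and invoking the support theorem. You are in fact more explicit than the paper, which does not spell out the minimization $\min_{\uu'}\bigl((\aaa\cdot\uu'+b)^2+|\uu'|^2\bigr)=b^2/(1+|\aaa|^2)$, the density argument showing the attainable directions $\boldsymbol\omega$ exhaust the sphere up to the slice $\omega_1=0$, or the separate treatment of $n=2$ via Remark~\ref{rmk:2drelation}.
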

\begin{figure}
\begin{center}
  \begin{tikzpicture}[>=stealth]
   \draw[->] (0,-1) -- (0,4) ;
   \draw[->] (-2.5,0) -- (7.5,0) ;
  \draw[thick] (3,0) -- (-1.5,4.02492) ;
    \draw[thick] (3,0) -- (7,3.57771) ;
     \draw[thick] (4,0) -- (-2,3.4641) ;
          \draw[thick] (4,0) -- (7,1.73205) ;
%   \draw[thick] (2,2) -- (-1,5) ;
%    \draw[<->,loosely dashed] (3,3) -- (2,3);
%      \draw[->,dashed] (2,2) -- (2,6) ;
   \draw[blue] (2,0) arc (0:180:2cm and 2cm);
%      \draw[blue] (0,4) arc (180:360:2cm and 0.35cm);

  \node at (3.5,-0.5) {$(\mathbf u,0)$};
    \node at (-0.5,-0.5) {$\mathbf 0$};
        \node at (-0.6,2.3) {$(\mathbf 0,1)$};
%  \node at (9,0.5) {$(u,0)$};
    \node at (7.5,-0.3) {$\xx$};
        \node at (-0.2,4.2) {$z$};

  \end{tikzpicture}
%  \caption{Schematic representation of a Compton camera}
\end{center}
\caption{The upper half unit circle and $V$-shape lines}% (b) the unit circle and its tangent lines}
\label{fig:integrationdomain}
\end{figure}
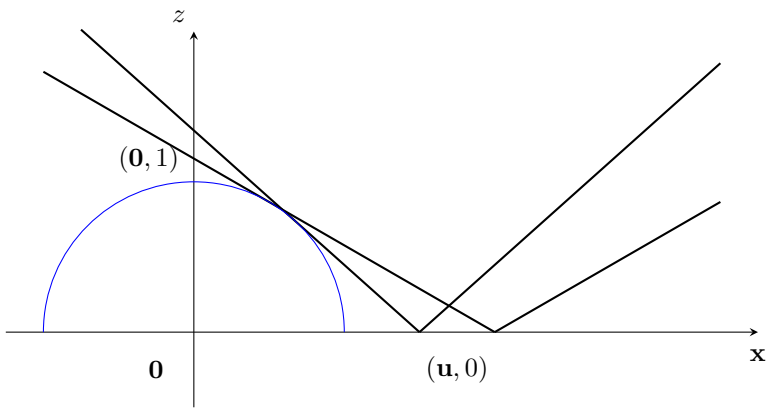
\begin{proof}
It is clear that if $f(\xx,\yy)$ is equal to zero for any $|\xx|^2+|\yy|^2>1$, then $Cf(\uu,\vv)$ is equal to zero for any $\vv\in\RR^{n-1}$ and $|\uu|>\sqrt{1+|\vv|^2}$.
From Proposition~\ref{thm:relation}, we know that for $|\boldsymbol\alpha|=|\aaa|^2(1+|\vv|^2)$ and $(\boldsymbol\alpha,\aaa,b)\in\RR^{n-1}\times\RR^{n-2}\times\RR$,
\begin{equation*}
\begin{split}
&\intL_{\RR^{n-2}} Cf(\aaa\cdot \uu'+b,\uu',\vv)d\uu'\\
&=((1+|\aaa|^2)(1+|\vv|^2))^{-1/2}Rf\left(\frac{(-1,\aaa,\boldsymbol\alpha)}{\sqrt{(1+|\aaa|^2)(1+|\vv|^2)}},\frac{b}{\sqrt{(1+|\aaa|^2)(1+|\vv|^2)}}\right).
\end{split}
\end{equation*}
%Consider the hyperplane $\{(\uu,\vv)\in\RRn:-u_1+\aaa \uu'+b=0\}$. 
%The distance to hyperplane from the origin is $b/\sqrt{1+|\aaa|^2}$.
By assumption, we have that
\begin{equation*}
\begin{split}
&Rf\left(\frac{(-1,\aaa,\boldsymbol\alpha)}{\sqrt{(1+|\aaa|^2)(1+|\vv|^2)}},\frac{b}{\sqrt{(1+|\aaa|^2)(1+|\vv|^2)}}\right)\\
&=\sqrt{(1+|\aaa|^2)(1+|\vv|^2)}\intL_{\RR^{n-2}} Cf(\aaa\cdot \uu'+b,\uu',\vv)d\uu'
\end{split}
\end{equation*}
is equal to zero for any $b/\sqrt{1+|\aaa|^2}>\sqrt{1+|\vv|^2}$, i.e., %$b/\sqrt{(1+|\aaa|^2)(1+|\vv|^2)}>1.$
$$
\frac{b}{\sqrt{(1+|\aaa|^2)(1+|\vv|^2)}}>1
$$
because the hyperplane $\{(u_1,\uu')\in\RR\times\RR^{n-2}:u_1=\aaa\cdot \uu'+b\}$ on $\RR^{n-1}$ has the normal vector $(-1,\aaa)$ and the distance $b/\sqrt{1+|\aaa|^2}$.
%In other words, 
%$$
%
%$$
%is zero for 
%$$
%\frac{b}{\sqrt{(1+|\aaa|^2)(1+|\vv|^2)}}>1.
%$$
The support theorem \cite[Theorem 3.2 in Chapter II]{natterer01} for the regular Radon transform completes our proof.
\end{proof}

We study the reconstruction problem for the limited data. %where a portion of the projections $Cf$ is missing.
The data $Cf(\uu,\vv)$ is known only for $\vv\in\RR^{n-1}$ with $0\leq a<|\vv|<b\leq\infty$.
In order to compute a limited reconstruction, we have to deal with the limited cone transform $C_{(a,b)}f(\uu,\vv)=\chi_{a<|\vv|<b}(\vv)Cf(\uu,\vv)$.
We define the projection operators by
$$
P_{(a,b)}f(\xx,\yy)=\mathcal F^{-1}(\chi_{a|\xxi|<|\eeta|<b|\xxi|}(\eeta)\mathcal F f(\xxi,\eeta))(\xx,\yy)
$$
and
$$
\mathbf P_{(a,b)}\mathbf f(\xx,z)=|z|^{\frac{2-n}2}\mathbf H_{\frac{n-2}2}\mathcal F^{-1}_1(\chi_{a|\xxi|<\rho<b|\xxi|}(\rho)|\rho|^{\frac{2-n}2}\mathbf H_{\frac{n-2}2}\mathcal F_1 \mathbf f(\xxi,\rho))(\xx,z).
$$ 
\begin{thm}
Let $f\in\mathcal S_r(\RRn)$.
Then we have for $k<n-1$,
$$
P_{(a,b)}I^{k}_1f=(2\pi)^{1-n}CI^{k+1-n}_1C_{(a,b)}f%\quad\mbox{for}\quad k<n-1.
$$
and 
$$
\mathbf P_{(a,b)}I^{k}_1\mathbf f=(2\pi)^{1-n}\mathcal CI^{k+1-n}_1\mathcal C_{(a,b)}\mathbf f.%\quad\mbox{for}\quad k<n-1
$$
%$C(P_{(a,b)}f)$ is equal to $C_{(a,b)}f$.
\end{thm}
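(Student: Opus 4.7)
My plan is to mimic the proof of Theorem~\ref{thm:inversionseries} but keep track of how the cutoff $\chi_{a<|\vv|<b}(\vv)$ transforms under the change of variables, and observe that the resulting Fourier multiplier in $(\xxi,\eeta)$ is exactly the one defining $P_{(a,b)}$.

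First I would start from Fourier inversion for $I^k_1 f$,
\begin{equation*}
I^k_1 f(\xx,\yy)=(2\pi)^{2(1-n)}\intL_{\RR^{n-1}}\intL_{\RR^{n-1}}|\xxi|^{-k}\mathcal F f(\xxi,\eeta)e^{i\xx\cdot\xxi}e^{i\yy\cdot\eeta}d\xxi d\eeta,
\end{equation*}
and make the change of variables $\eeta\to\vv|\xxi|$ as in the proof of Theorem~\ref{thm:inversionseries}. This introduces a Jacobian $|\xxi|^{n-1}$ and yields
\begin{equation*}
I^k_1 f(\xx,\yy)=(2\pi)^{2(1-n)}\intL_{\RR^{n-1}}\intL_{\RR^{n-1}}|\xxi|^{n-1-k}\mathcal F f(\xxi,\vv|\xxi|)e^{i\xx\cdot\xxi}e^{i\yy\cdot\vv|\xxi|}d\xxi d\vv.
\end{equation*}

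Next I would insert $\chi_{a<|\vv|<b}(\vv)$ into the $\vv$-integral and reverse the change of variables. The region $a<|\vv|<b$ pulls back to $a|\xxi|<|\eeta|<b|\xxi|$, so the resulting object is precisely
\begin{equation*}
\mathcal F^{-1}\bigl(\chi_{a|\xxi|<|\eeta|<b|\xxi|}(\eeta)|\xxi|^{-k}\mathcal F f(\xxi,\eeta)\bigr)(\xx,\yy)=P_{(a,b)}I^k_1 f(\xx,\yy).
\end{equation*}
On the other hand, applying Theorem~\ref{lem:fourierslicetheorem} on the same (restricted) integrand converts $\mathcal F f(\xxi,\vv|\xxi|)$ into $\mathcal F_1(Cf)(\xxi,\vv)$, and the cutoff in $\vv$ then moves inside the Fourier transform to give $\mathcal F_1(C_{(a,b)}f)(\xxi,\vv)$. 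Recognizing the inner $\xxi$-integral as $(2\pi)^{n-1}\,I^{k+1-n}_1(C_{(a,b)}f)$ evaluated at $(\xx+|\yy|\vv,\vv)$, and then recognizing the outer $\vv$-integral as an application of $C$ at $(\xx,\yy)$, one obtains
\begin{equation*}
P_{(a,b)}I^k_1 f=(2\pi)^{1-n}CI^{k+1-n}_1 C_{(a,b)}f,
\end{equation*}
exactly as in the argument for Theorem~\ref{thm:inversionseries}, with $(2\pi)^{2(1-n)}\cdot(2\pi)^{n-1}=(2\pi)^{1-n}$.

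For the second identity, I would use the radial/Hankel dictionary: since $\mathbf f(\xx,|\yy|)=f(\xx,\yy)$ and $f$ is radial in $\yy$, the partial Fourier transform $\mathcal F_1 f(\xxi,\eeta)$ is radial in $\eeta$, and the Fourier transform of a radial function in $n-1$ dimensions is, up to the factor $(2\pi)^{(n-1)/2}|\eeta|^{(3-n)/2}$, the Hankel transform $\mathbf H_{(n-3)/2}$ applied to the $n-2$-power-weighted Fourier variable in $\rho=|\eeta|$. In that dictionary, the radial cutoff $\chi_{a|\xxi|<|\eeta|<b|\xxi|}(\eeta)$ becomes $\chi_{a|\xxi|<\rho<b|\xxi|}(\rho)$, so the operator $P_{(a,b)}$ pulls back to $\mathbf P_{(a,b)}$ by the very definition in the statement. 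Everything else is intertwined (the Riesz potentials $I^k_1$ act only on $\xxi$ and commute through the Hankel transform in $\rho$), so the second identity follows from the first by restriction to radial functions.

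The step that needs to be done carefully is the bookkeeping in step~3, namely checking that when $\chi_{a<|\vv|<b}(\vv)$ is inserted into the $\vv$-integrand before undoing the change of variables, the resulting Fourier multiplier is identically $\chi_{a|\xxi|<|\eeta|<b|\xxi|}(\eeta)$ used in the definition of $P_{(a,b)}$; once this identification is clean, all the other manipulations are exact copies of the proof of Theorem~\ref{thm:inversionseries}.
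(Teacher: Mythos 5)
Your proposal is correct and follows essentially the same route as the paper: both arguments rerun the proof of Theorem~\ref{thm:inversionseries} with the cutoff inserted, using the change of variables $\eeta\to\vv|\xxi|$ to identify $\chi_{a<|\vv|<b}(\vv)$ with the multiplier $\chi_{a|\xxi|<|\eeta|<b|\xxi|}(\eeta)$ defining $P_{(a,b)}$, and the Fourier slice theorem to convert the restricted data into $\mathcal F_1(C_{(a,b)}f)$. The paper packages this identification as a displayed identity for $\mathcal F(P_{(a,b)}I^k_1 f)(\xxi,\vv|\xxi|)$ before the inversion computation, but the content is the same.
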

%This theorem says that $P_{(a,b)}f$ can be perfectly recovered from the limited cone transform $C_{(a,b)}f(\uu,\vv)$ as following:

%Also, this theorem implies $C_{(a,b)}(P_{(a,b)^c}f)=0$. 
%$$
%P_{(a,b)}^cf=\mathcal F^{-1}(\chi_{\{a|\xxi|<|\eta|<b|\xxi|\}^c}(\eta)\mathcal F f(\xxi,\eta)).
%$$
\begin{proof}
By Theorem \ref{lem:fourierslicetheorem}, we have $\mathcal F_1(Cf)(\xxi,\vv)=\mathcal Ff(\xxi,\vv|\xxi)$, so
\begin{equation}\label{eq:slicelimited}
\begin{split}
%\mathcal F_1(C(P_{(a,b)}f))(\xxi,\vv)&=
\mathcal F(P_{(a,b)}I^{k}_1f)(\xxi,\vv|\xxi|)&=|\xxi|^{-k}\chi_{a<|\vv|<b}(\vv)\mathcal Ff(\xxi,\vv|\xxi|)=|\xxi|^{-k}\chi_{a<|\vv|<b}(\vv)\mathcal F_1(Cf)(\xxi,\vv)\\
&=|\xxi|^{-k}\mathcal F_1(C_{(a,b)}f)(\xxi,\vv).
\end{split}
\end{equation}
Similar to the proof of Theorem \ref{thm:inversionseries}, we have
$$
\begin{array}{ll}
P_{(a,b)}I^{k}_1f(\xx,\yy)&\displaystyle=(2\pi)^{2(1-n)}\intL_{\RR^{n-1}}\intL_{\RR^{n-1}}\mathcal F(P_{(a,b)}I^{k}f)(\xxi,\eeta)e^{-i(\xxi,\eeta)\cdot(\xx,\yy)} d\eeta d\xxi\\
&\displaystyle=(2\pi)^{2(1-n)}\intL_{\RR^{n-1}}\intL_{\RR^{n-1}}\mathcal F(P_{(a,b)}I^{k}f)(\xxi,\vv|\xxi|)e^{-i(\xxi,\vv|\xxi|)\cdot(\xx,\yy)}|\xxi|^{n-1}d\vv d\xxi \\
&\displaystyle=(2\pi)^{2(1-n)}\intL_{\RR^{n-1}}\intL_{\RR^{n-1}}\mathcal F_1(C_{(a,b)}f)(\xxi,\vv)e^{-i(\xxi,|\vv|\xxi)\cdot(\xx,\yy)}|\xxi|^{n-1-k}d\vv d\xxi,
\end{array}
$$
where in the second line, we changed the variables $\eeta\to\vv|\xxi|$ and in the third line, we used \eqref{eq:slicelimited} and the fact that $\mathcal F_1(C_{(a,b)}f)(\xxi,\vv)$ is radial in $\vv$.
Therefore, we have
$$
%\begin{array}{ll}
P_{(a,b)}I^{k}f(\xx,\yy)=(2\pi)^{(1-n)}\intL_{\RR^{n-1}}I^{k-n+1}_1(C_{(a,b)}f)(\xx+|\vv|\yy,\vv) d\vv.
%\end{array}
$$
\end{proof}
%\end{thm}
%%%%%%%%%%%%%%%%%%
\section{Conclusion}
%%%%%%%%%%%%%%%%%%%%%%%%%%%%
Several types of cone transforms have been studied since the Compton camera was introduced. 
Here we study the $n$-dimensional cone transform. 
Two inversion formulas, range conditions, Sobolev space estimates, and uniqueness and reconstruction for a limited data problem are presented.

\nocite{frikelq13}
\bibliographystyle{plain}

%\bibliography{s.moonref}

\end{document}